\newtheorem{theorem}{Theorem}
\newtheorem{conjecture}{Conjecture}
\newtheorem{corollary}{Corollary}
\newtheorem{lemma}{Lemma}
\begin{document}

\author{Peng-An Chen \\
Department of Applied Mathematics\\
National Taitung University\\
Taitung, Taiwan}
\date{Nov 17, 2017}
\title{On the chromatic number of almost $s$-stable Kneser graphs}
\maketitle

\begin{abstract}
In 2011, Meunier conjectured that for positive integers $n,k,r,s$ with $%
k\geq 2$, $r\geq 2$, and $n\geq \max (\{r,s\})k$, the chromatic number of $s$%
-stable $r$-uniform Kneser hypergraphs is equal to $\left\lceil \frac{n-\max
(\{r,s\})(k-1)}{r-1}\right\rceil $. It is a strengthened version of the
conjecture proposed by Ziegler (2002), and Alon, Drewnowski and \L uczak
(2009). The problem about the chromatic number of almost $s$-stable $r$%
-uniform Kneser hypergraphs has also been introduced by Meunier (2011).

For the $r=2$ case of the Meunier conjecture, Jonsson (2012) provided a
purely combinatorial proof to confirm the conjecture for $s\geq 4$ and $n$
sufficiently large, and by Chen (2015) for even $s$ and any $n$. The case $%
s=3$ is completely open, even the chromatic number of the usual almost $s$%
-stable Kneser graphs.

In this paper, we obtain a topological lower bound for the chromatic number
of almost $s$-stable $r$-uniform Kneser hypergraphs via a different
approach. For the case $r=2$, we conclude that the chromatic number of
almost $s$-stable Kneser graphs is equal to $n-s(k-1)$ for all $s\geq 2$.
Set $t=n-s(k-1)$. We show that any proper coloring of an almost $s$-stable
Kneser graph must contain a completely multicolored complete bipartite
subgraph $K_{\left\lceil \frac{t}{2}\right\rceil \left\lfloor \frac{t}{2}%
\right\rfloor }$. It follows that the local chromatic number of almost $s$%
-stable Kneser graphs is at least $\left\lceil \frac{t}{2}\right\rceil +1$.
It is a strengthened result of Simonyi and Tardos (2007), and Meunier's
(2014) lower bound for almost $s$-stable Kneser graphs.
\end{abstract}

\section{\textbf{Introduction}}

Let $[n]$ denote the set $\left\{ 1,2,\ldots ,n\right\} $. For every
nonempty subset $S$ of $[n]$, $\max \left( S\right) $ denotes the maximal
element of $S$. In particular, we define $\max ($\O $)$\ = $0$. A mapping $%
c:V\longrightarrow \lbrack m]$ is a \textit{proper} \textit{coloring} of a
graph $G=\left( V,E\right) $ with $m$ colors if none of the edges $e\in E$
is monochromatic under $c$. The \textit{chromatic number} $\chi (G)$ of a
graph $G$ is the smallest number $m$ such that a proper coloring $%
c:V\longrightarrow \lbrack m]$ exists. For positive integers $n,k$ and $s$,
a $k$-subset $S\subseteq $ $[n]$ is $s$\textit{-stable} ( resp. \textit{%
almost} $s$\textit{-stable}) if $\left\vert S\right\vert =k$ and any two of
its elements are at least "at distance $s$ apart" on the $n$-cycle ( resp. $%
n $-path), that is, if $s\leq \left\vert i-j\right\vert \leq n-s$ ( resp. $%
\left\vert i-j\right\vert \geq s$) for distinct $i,j\in S$. Hereafter, the
symbols $\binom{\left[ n\right] }{k}$, $\binom{\left[ n\right] }{k}_{s\text{-%
}stab}$, and $\binom{\left[ n\right] }{k}_{\widetilde{s\text{-}stab}}$ stand
for the collection of all $k$-subsets of $[n]$, the collection of all $s$%
-stable $k$-subsets of $[n]$, and the collection of all almost $s$-stable $k$%
-subsets of $[n]$, respectively. Choosing $k=1$, we obtain that $\binom{%
\left[ n\right] }{1}=\binom{\left[ n\right] }{1}_{s\text{-}stab}=\binom{%
\left[ n\right] }{1}_{\widetilde{s\text{-}stab}}$. One can easily check that 
$\binom{\left[ n\right] }{k}_{s\text{-}stab}\subseteq \binom{\left[ n\right] 
}{k}_{\widetilde{s\text{-}stab}}\subseteq \binom{\left[ n\right] }{k}$.

The \textit{Kneser graph}, denoted as $KG(n,k)$, is defined for positive
integers $n\geq 2k$ as the graph having $\binom{\left[ n\right] }{k}$ as
vertex set. Two vertices are defined to be adjacent in $KG(n,k)$ if they are
disjoint. Choosing $k=1$, we obtain the \textit{complete graph\ }$K_{n}$.
The $s$\textit{-stable Kneser graph}, denoted as $KG^{2}(n,k)_{s\text{-}%
stab} $, is defined for positive integers $n\geq sk$ as the graph having $%
\binom{\left[ n\right] }{k}_{s\text{-}stab}$ as vertex set. Two vertices are
defined to be adjacent in $KG^{2}(n,k)_{s\text{-}stab}$ if they are
disjoint. Choosing $s=1$, we obtain the Kneser graph\textit{\ }$KG(n,k)$,
whereas $s=2$ yields the \textit{stable Kneser graph }or \textit{Schrijver
graph}, denoted as $SG(n,k)$. The \textit{almost} $s$\textit{-stable Kneser
graph}, denoted as $KG^{2}(n,k)_{\widetilde{s\text{-}stab}}$, is defined for
positive integers $n\geq sk$ as the graph having $\binom{\left[ n\right] }{k}%
_{\widetilde{s\text{-}stab}}$ as vertex set. Two vertices are defined to be
adjacent in $KG^{2}(n,k)_{\widetilde{s\text{-}stab}}$ if they are disjoint.

Kneser \cite{Kneser55} conjectured that the chromatic number $\chi \left(
KG(n,k)\right) $ of the Kneser graph $KG(n,k)$ is equal to $n-2k+2$.
Kneser's conjecture \cite{Kneser55} was proved by Lov\'{a}sz \cite{Lovasz78}
using the Borsuk-Ulam theorem; all subsequent proofs, extensions and
generalizations also relied on Algebraic Topology results, namely the
Borsuk-Ulam theorem and its extensions. Matou\v{s}ek \cite{Matousek04}
provided the first combinatorial proof of Kneser's conjecture \cite{Kneser55}%
. Schrijver \cite{Schrijver78} found a fascinating family of subgraphs $%
SG\left( n,k\right) $ of $KG(n,k)$ that are vertex-critical with respect to
the chromatic number. Ziegler \cite{Ziegler02,Ziegler06} provided a
combinatorial proof of Schrijver's theorem \cite{Schrijver78}. Meunier \cite%
{Meunier11} provided another simple combinatorial proof of Schrijver's
theorem \cite{Schrijver78}.

A hypergraph $\mathcal{H}$ is a pair $\mathcal{H}=(V(\mathcal{H}),E(\mathcal{%
H}))$, where $V(\mathcal{H})$ is a finite set and $E(\mathcal{H})$ a family
of subsets of $V(\mathcal{H})$. The set $V(\mathcal{H})$ is called the
vertex set and the set $E(\mathcal{H})$ is called the edge set. Let $r$ be
any positive integer with $r\geq 2$. A hypergraph is said to be $r$-\textit{%
uniform} if all its edges $S$ $\in \mathcal{H}$ have the same cardinality $r$%
. A \textit{proper coloring} of a hypergraph $\mathcal{H}$ with $t$ colors
is a function $c:V\longrightarrow \lbrack t]$ so that no edge $S$ $\in 
\mathcal{H}$ is monochromatic, that is, every edge contains two elements $%
i,j\in S$ with $c(i)\neq c(j)$. Equivalently, no $c^{-1}(i)$ contains a set $%
S$ $\in \mathcal{H}$. The \textit{chromatic number} $\chi (\mathcal{H})$ of
a hypergraph $\mathcal{H}$ is the smallest number $t$ such that there exists
a $t$-coloring for $\mathcal{H}$. Throughout this paper, we suppose that $V(%
\mathcal{H})=[n]$ for some positive integer $n$.

For any hypergraph $\mathcal{H}=(V(\mathcal{H}),E(\mathcal{H}))$ and
positive integer $r\geq 2$, the \textit{general Kneser hypergraph} $KG^{r}(%
\mathcal{H})$ of $\mathcal{H}$ is an $r$-uniform hypergraph has $E(\mathcal{H%
})$ as its vertex set and the edge set consisting of all $r$-tuples of
pairwise disjoint edges of $\mathcal{H}$. The \textit{Kneser hypergraph} $%
KG^{r}\binom{\left[ n\right] }{k}$ is an $r$-uniform hypergraph which has $%
\binom{\left[ n\right] }{k}$ as vertex set and whose edges are formed by the 
$r$-tuples of disjoint $k$-element subsets of $[n]$. Choosing $r=2$, we
obtain the ordinary Kneser graph\textit{\ }$KG(n,k)$. Hereafter, for
positive integes $n,k,r$ with $r\geq 2$ and $n\geq rk$, the hypergraph $%
KG^{r}\binom{\left[ n\right] }{k}$ is denoted by $KG^{r}(n,k)$. Erd\H{o}s 
\cite{Erd76} conjectured that, for $n\geq rk$,%
\begin{equation*}
\chi \left( KG^{r}(n,k)\right) =\left\lceil \frac{n-r(k-1)}{r-1}\right\rceil 
\text{.}
\end{equation*}%
The conjecture settled by Alon, Frankl and Lov\'{a}sz \cite{AFL86}.
Above-mentioned results are generalized in many ways. One of the most
promising generalizations is the one found by Dol'nikov \cite{Dolnikov88}
and extended by K\v{r}\'{\i}\v{z} \cite{Kriz92,Kriz2000}. Finding a lower
bound for chromatic number of Kneser hypergraphs has been studied in the
literature, see \cite{Dolnikov88,Kriz92,Kriz2000,Sarkaria90,Simonyi and
Tardos06,Simonyi and Tardos07,Ziegler02,Ziegler06}.

The $r$-\textit{colorability defect} of $\mathcal{H}$, denoted by cd$^{r}(%
\mathcal{H})$, is the minimum number of vertices that should be removed from 
$\mathcal{H}$ so that the induced subhypergraphs on the remaining vertices
has the chromatic number at most $r$. Dol'nikov \cite{Dolnikov88} (for $r=2$%
) and K\v{r}\'{\i}\v{z} \cite{Kriz92,Kriz2000} proved that

\begin{theorem}[ Dol'nikov-K\v{r}\'{\i}\v{z} theorem]
\label{t00006}Let $\mathcal{H}$ be a hypergraph and assume that \O\ is not
an edge of $\mathcal{H}$. Then 
\begin{equation*}
\chi \left( KG^{r}(\mathcal{H})\right) \geq \left\lceil \frac{\text{cd}^{r}(%
\mathcal{H})}{r-1}\right\rceil
\end{equation*}%
for any integer $r\geq 2$.
\end{theorem}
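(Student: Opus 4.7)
The plan is to use the topological method for chromatic lower bounds, adapted to the hypergraph setting. Set $d = \mathrm{cd}^r(\mathcal{H})$ and consider the \emph{independence complex} $I(\mathcal{H})$ on vertex set $V(\mathcal{H}) = [n]$, whose simplices are exactly those subsets of $[n]$ containing no edge of $\mathcal{H}$; this is a genuine simplicial complex because $\varnothing \notin E(\mathcal{H})$. By the very definition of $d$, whenever $A_1, \ldots, A_r$ are pairwise disjoint faces of $I(\mathcal{H})$ one has $|A_1 \sqcup \cdots \sqcup A_r| \leq n - d$. This is the single combinatorial input that drives the proof.

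The next step is to translate this combinatorial bound into a topological statement about the $r$-fold deleted join $I(\mathcal{H})^{\ast r}_{\Delta}$, whose simplices are tuples $(A_1, \ldots, A_r)$ of pairwise disjoint faces of $I(\mathcal{H})$. The disjointness estimate, together with the standard fact that the deleted join of the full simplex on $[n]$ is an iterated join and thus highly connected, shows that $I(\mathcal{H})^{\ast r}_{\Delta}$ contains the full $(d-1)$-skeleton of the $r$-fold deleted join of the discrete set $[n]$; consequently $I(\mathcal{H})^{\ast r}_{\Delta}$ is $(d-2)$-connected, or equivalently its $\mathbb{Z}_r$-index is at least $d-1$.

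The topological lower bound on the chromatic number is then the standard reduction: a proper coloring of $KG^r(\mathcal{H})$ with $t$ colors assigns to each face of $I(\mathcal{H})$ whose $r$-fold disjoint copies form an edge of the Kneser hypergraph a label in $[t]$, and extending by simplicial equivariance yields a $\mathbb{Z}_r$-equivariant simplicial map from $I(\mathcal{H})^{\ast r}_{\Delta}$ into the $r$-fold deleted join $([t])^{\ast r}_{\Delta}$ of a discrete set of size $t$. The target has $\mathbb{Z}_r$-index equal to $(t-1)(r-1)$, so monotonicity of the index under equivariant maps forces $d - 1 \leq (t-1)(r-1)$, which rearranges to $t \geq \lceil d/(r-1) \rceil$. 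Taking $t = \chi(KG^r(\mathcal{H}))$ concludes the argument.

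The hardest step is the index calculation for arbitrary $r$. For prime $r$ one may apply the $\mathbb{Z}_r$-Borsuk--Ulam theorem directly, but for composite $r$ the cyclic action on $([t])^{\ast r}_{\Delta}$ has nontrivial fixed-point loci coming from proper subgroups of $\mathbb{Z}_r$, so one must rule out equivariant maps via Dold's theorem or Volovikov's lemma after checking that these fixed sets are low-dimensional compared with $d$. This fixed-point bookkeeping, invisible in Dol'nikov's original $r=2$ argument, is the delicate technical point of K\v{r}\'{\i}\v{z}'s extension and the place where the proof must be carried out most carefully.
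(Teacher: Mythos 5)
There is no in-paper proof to compare against: the paper states this as the known Dol'nikov--K\v{r}\'{\i}\v{z} theorem and cites Dol'nikov and K\v{r}\'{\i}\v{z} for it. Judged on its own, your argument has a structural error: you have the independence complex on the wrong side of the equivariant map, and the colorability defect enters with the wrong sign. The estimate you correctly derive, $|A_1\sqcup\cdots\sqcup A_r|\le n-d$ for pairwise disjoint faces of $I(\mathcal{H})$, is an \emph{upper} bound on the dimension of $I(\mathcal{H})^{\ast r}_{\Delta}$ (namely $\dim\le n-d-1$), hence an upper bound on its $\mathbb{Z}_r$-index; it cannot yield the lower bound $\mathrm{ind}\ge d-1$ you claim. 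Your skeleton argument does not repair this: the $(d-1)$-skeleton of the deleted join of the discrete set $[n]$ lies inside $I(\mathcal{H})^{\ast r}_{\Delta}$ exactly when $\mathcal{H}$ has no edges of size at most $1$, independently of the value of $d$ (and ``contains a $(d-2)$-connected invariant subcomplex'' does not imply the ambient complex is $(d-2)$-connected, though index monotonicity would salvage that particular inference). The second fatal point is the map itself: a proper coloring of $KG^r(\mathcal{H})$ colors the \emph{edges} of $\mathcal{H}$, and faces of $I(\mathcal{H})$ by definition contain no edge, so the coloring assigns nothing to the simplices of $I(\mathcal{H})^{\ast r}_{\Delta}$; the equivariant map into $([t])^{\ast r}_{\Delta}$ in your third step is not defined. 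A quick sanity check shows the combination of your steps cannot be patched as stated: since your skeleton containment never uses $d$, the argument would prove $\chi(KG^r(\mathcal{H}))\ge\lceil n/(r-1)\rceil$ for every hypergraph with all edges of size at least $2$, which fails already for $\mathcal{H}$ consisting of a single $2$-element edge on a large vertex set (there $KG^r(\mathcal{H})$ has one vertex and chromatic number $1$).

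The correct deleted-join bookkeeping reverses the roles: the source of the equivariant map is (a subdivision of) $(D_n)^{\ast r}_{\Delta}\cong[r]^{\ast n}$, of index $n-1$; the regime of sign vectors none of whose parts contains an edge is precisely where the defect is used, contributing index at most $n-d-1$ because of the dimension bound above, while the regime where some part contains an edge is handled by the proper coloring and contributes at most $t(r-1)-1$; combining gives $n-1\le(n-d-1)+1+(t(r-1)-1)$, i.e.\ $t\ge\lceil d/(r-1)\rceil$. Equivalently, for prime $r=p$ one runs the $Z_p$-Tucker Lemma \ref{Zp-Tucker} with $\alpha=n-d$, setting $\lambda_2(X)=|X|$ when no $X_j$ contains an edge (property (i) is then automatic since nested sign vectors of equal size coincide) and $\lambda_2(X)=n-d+c(S)$ otherwise; this is exactly the scheme this paper uses in the proof of Theorem \ref{t00001}. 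Finally, your treatment of composite $r$ is only a gesture: the standard routes are K\v{r}\'{\i}\v{z}'s equivariant-cohomology argument or a purely combinatorial induction on the number of prime factors reducing to the prime case, not an ad hoc fixed-point check for the cyclic action; as written this part is also a gap.
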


These results were also generalized by Ziegler \cite{Ziegler02,Ziegler06}.
Note that if we set $\mathcal{H}=\left( [n],\binom{\left[ n\right] }{k}%
\right) $, then this result implies $\chi \left( KG^{r}(n,k)\right)
=\left\lceil \frac{n-r(k-1)}{r-1}\right\rceil $ bringing in Alon, Frankl and
Lov\'{a}sz's result \cite{AFL86}.

The $s$-\textit{stable }$r$-\textit{uniform} \textit{Kneser hypergraph} $%
KG^{r}\binom{\left[ n\right] }{k}_{s\text{-}stable}$ is an $r$-uniform
hypergraph which has $\binom{\left[ n\right] }{k}_{s\text{-}stable}$ as
vertex set and whose edges are formed by the $r$-tuples of disjoint $s$%
-stable $k$-element subsets of $[n]$. Note that an $s$-stable $2$-uniform\
Kneser hypergraph is simply an $s$\textit{-stable Kneser graph}. Hereafter,
for positive integes $n,k,r,s$ with $k\geq 2$, $r\geq 2$ and $n\geq \max
(\{r,s\})k$, the hypergraph $KG^{r}\binom{\left[ n\right] }{k}_{s\text{-}%
stable}$ is denoted by $KG^{r}(n,k)_{s\text{-}stable}$.

The \textit{almost} $s$-\textit{stable }$r$\textit{-uniform\ Kneser
hypergraph} $KG^{r}\binom{\left[ n\right] }{k}_{\widetilde{s\text{-}stab}}$
is an $r$-uniform hypergraph which has $\binom{\left[ n\right] }{k}_{%
\widetilde{s\text{-}stab}}$ as vertex set and whose edges are formed by the $%
r$-tuples of disjoint almost $s$-stable $k$-element subsets of $[n]$. Note
that an almost $s$-stable $2$-uniform\ Kneser hypergraph is simply an 
\textit{almost }$s$\textit{-stable Kneser graph}. Hereafter, for positive
integes $n,k,r,s$ with $k\geq 2$, $r\geq 2$ and $n\geq \max (\{r,s\})k$, the
hypergraph $KG^{r}\binom{\left[ n\right] }{k}_{\widetilde{s\text{-}stab}}$
is denoted by $KG^{r}(n,k)_{\widetilde{s\text{-}stab}}$.

Ziegler \cite{Ziegler02,Ziegler06} gave a combinatorial proof of the
Alon-Frankl-Lov\'{a}sz theorem \cite{AFL86}. He was inspired by a
combinatorial proof of the Lov\'{a}sz theorem found by Matou\v{s}ek \cite%
{Matousek04}. At the end of his paper, Ziegler made the supposition that $%
\chi \left( KG^{r}(n,k)_{r\text{-}stab}\right) =\chi \left(
KG^{r}(n,k)\right) $ for any $n\geq rk$. Alon, Drewnowski and \L uczak make
this supposition an explicit conjecture in \cite{ADL09}.

\begin{conjecture}
\label{c000}Let $n,k$,and $r$ be positive integers such that $k\geq 2$, $%
r\geq 2$, and $n\geq rk$. Then $\chi \left( KG^{r}(n,k)_{r\text{-}%
stab}\right) =\left\lceil \frac{n-r(k-1)}{r-1}\right\rceil =\chi \left(
KG^{r}(n,k)\right) $.
\end{conjecture}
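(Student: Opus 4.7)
The upper bound
\[
\chi\bigl(KG^{r}(n,k)_{r\text{-}stab}\bigr)\leq\Bigl\lceil\tfrac{n-r(k-1)}{r-1}\Bigr\rceil
\]
is immediate: the $r$-stable Kneser hypergraph is an induced sub-hypergraph of $KG^{r}(n,k)$, so every proper coloring of the latter restricts to one of the former, and the chromatic number of $KG^{r}(n,k)$ equals the stated value by the Alon--Frankl--Lov\'asz theorem (equivalently, by the $\mathcal{H}=([n],\binom{[n]}{k})$ specialization of Theorem~\ref{t00006}, in which case $\mathrm{cd}^{r}(\mathcal{H})=n-r(k-1)$). Thus the content of Conjecture~\ref{c000} is the matching \emph{lower} bound.

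The plan for that lower bound is topological. Associate with the vertex set $V:=\binom{[n]}{k}_{r\text{-}stab}$ its $r$-fold deleted join $X$, a simplicial $\mathbb{Z}_{r}$-complex whose free $\mathbb{Z}_{r}$-action cyclically permutes the join coordinates. A proper coloring of $KG^{r}(n,k)_{r\text{-}stab}$ by $t$ colors produces, by labeling each vertex of $X$ by its color together with its join coordinate, a $\mathbb{Z}_{r}$-equivariant map $X\rightarrow(\mathbb{Z}_{r})^{\ast t}$. A Dold/Sarkaria-type theorem then forces
\[
t\;\geq\;\mathrm{ind}_{\mathbb{Z}_{r}}(X)+1,
\]
so it suffices to prove $\mathrm{ind}_{\mathbb{Z}_{r}}(X)\geq\bigl\lceil\tfrac{n-r(k-1)}{r-1}\bigr\rceil-1$. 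The indispensable auxiliary input is the homotopy type of the simplicial complex $\Delta$ whose simplices are the $r$-stable subsets of the cycle $[n]$: by Bj\"orner and collaborators (and elaborated by Jonsson and Meunier), $\Delta$ admits a cyclic shelling and is homotopy equivalent to a wedge of spheres of the expected dimension. Propagating this information through the deleted-join construction is what should yield the desired index estimate.

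The main obstacle, as the partial record confirms (Jonsson for $r=2$, $s\geq 4$ and $n$ large; Chen for $r=2$ and even $s$; the case $r=2$, $s=3$ remaining open even for the almost $s$-stable graph), is turning a sharp connectivity estimate for $\Delta$ into a sharp \emph{$\mathbb{Z}_{r}$-equivariant} invariant. For prime $r$, one would try to adapt Ziegler's signed Tucker lemma to $r$-stable labelings, but the cyclic boundary creates exceptional configurations that must be excluded by hand. For composite $r$, equivariant obstruction theory enters the picture and its bounds typically fall short of the target. A natural intermediate target, in the spirit of the present paper, is to first settle the almost $r$-stable analogue and then attempt to push any optimal coloring of $KG^{r}(n,k)_{\widetilde{r\text{-}stab}}$ down to the $r$-stable subhypergraph without spending additional colors, thereby deriving Conjecture~\ref{c000} from its easier almost-stable cousin.
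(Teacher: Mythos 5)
The statement you are asked to prove is Conjecture~\ref{c000}, which is open: the paper does not prove it (it only records the known partial results, e.g.\ Alon--Drewnowski--\L uczak for $r$ a power of $2$, and proves weaker topological lower bounds such as Theorem~\ref{t00001}--\ref{t00002}, which for $s=r$ give only $\left\lceil \frac{n-(2r-2)(k-1)}{r-1}\right\rceil$ and fall short of the conjectured value). Your text is likewise not a proof but a research plan, and the plan has a genuine gap exactly where the conjecture lives: the upper bound you give is correct and standard, but the matching lower bound is never established. Reducing it to $\mathrm{ind}_{\mathbb{Z}_{r}}(X)\geq\left\lceil\frac{n-r(k-1)}{r-1}\right\rceil-1$ for the $r$-fold deleted join of the $r$-stable complex is a restatement of the difficulty, not a solution; no sharp equivariant index (or connectivity) computation for this complex is known, the nonequivariant homotopy type of the $r$-stable complex that you invoke does not by itself control the $\mathbb{Z}_{r}$-index, and the Dold/Sarkaria inequality you quote in the form $t\geq \mathrm{ind}+1$ is the $r=2$ version anyway (for $r$-uniform hypergraphs the factor $r-1$ must enter, roughly $t(r-1)\geq \mathrm{ind}(X)+1-$lower-order terms), so even the shape of the target inequality needs repair.

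Your proposed fallback --- first settle the almost $r$-stable case and then ``push an optimal coloring of $KG^{r}(n,k)_{\widetilde{r\text{-}stab}}$ down to the $r$-stable subhypergraph'' --- goes in the wrong direction. Since $KG^{r}(n,k)_{r\text{-}stab}$ is an induced subhypergraph of $KG^{r}(n,k)_{\widetilde{r\text{-}stab}}$, restricting a coloring only yields $\chi\left(KG^{r}(n,k)_{r\text{-}stab}\right)\leq\chi\left(KG^{r}(n,k)_{\widetilde{r\text{-}stab}}\right)$, i.e.\ an upper bound on the quantity you need to bound from below; knowing the almost-stable chromatic number (Conjecture~\ref{c0002}/\ref{c0004}) therefore does not imply Conjecture~\ref{c000}. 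Indeed the paper is explicit that the equivalence between Conjecture~\ref{c000} and Conjecture~\ref{c0001} (or their almost-stable analogues) is itself unresolved; Theorem~\ref{t07} only relates the two almost-stable conjectures to each other. So the proposal identifies a reasonable framework but supplies neither the key equivariant estimate nor a valid reduction, and the statement remains unproven.
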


Alon, Drewnowski, and \L uczak \cite{ADL09} also confirmed Conjecture \ref%
{c000} for $r$ is a power of $2$. Meunier \cite{Meunier11} proposed the
following conjecture:

\begin{conjecture}
\label{c0001}Let $n,k,r$,and $s$ be positive integers such that $k\geq 2$, $%
r\geq 2$, and $n\geq \max (\{r,s\})k$. Then $\chi \left( KG^{r}(n,k)_{s\text{%
-}stab}\right) =\left\lceil \frac{n-\max (\{r,s\})(k-1)}{r-1}\right\rceil $.
\end{conjecture}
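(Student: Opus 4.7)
The plan is to prove Conjecture \ref{c0001} by establishing matching upper and lower bounds on $\chi(KG^r(n,k)_{s\text{-}stab})$. The upper bound should come from an explicit proper coloring, while the lower bound requires a topological obstruction that strengthens the Dol'nikov--K\v{r}\'{\i}\v{z} bound (Theorem \ref{t00006}) in the regime $s>r$.

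For the upper bound I would split into two cases. When $s\le r$, we have $\max(\{r,s\})=r$, so the conjectured value coincides with $\chi(KG^r(n,k))=\lceil (n-r(k-1))/(r-1)\rceil$; since $KG^r(n,k)_{s\text{-}stab}$ is a sub-hypergraph of $KG^r(n,k)$, the Alon--Frankl--Lov\'{a}sz coloring restricts to a proper coloring of the right size. When $s>r$, I would color an $s$-stable $k$-subset $S=\{a_1<\dots<a_k\}$ by the value $\min\{\lceil a_1/(r-1)\rceil,\lceil (n-s(k-1))/(r-1)\rceil\}$ after first breaking the cyclic symmetry suitably; properness follows from the observation that $r$ pairwise disjoint $s$-stable $k$-subsets have minima whose pairwise gaps must simultaneously accommodate $s$-stability and disjointness, so they cannot all be crammed into a single short window.

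For the lower bound I would pursue a $\mathbb{Z}_r$-equivariant argument in the spirit of Ziegler's combinatorial reformulation of Alon--Frankl--Lov\'{a}sz. Group $[n]$ into blocks adapted to the $s$-stability constraint and build a simplicial complex of $\mathbb{Z}_r$-labelled chains whose maximal simplices correspond to $r$-tuples of pairwise disjoint $s$-stable $k$-subsets. Dold's theorem (or the Sarkaria index) would then show that the existence of a proper coloring with fewer than $\lceil (n-\max(\{r,s\})(k-1))/(r-1)\rceil$ colors forces a $\mathbb{Z}_r$-equivariant map from an $E_n\mathbb{Z}_r$-space to a sphere of too low a dimension, a contradiction.

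The main obstacle is the lower bound when $s>r$ and $r$ is not a prime power. The Dol'nikov--K\v{r}\'{\i}\v{z} bound alone gives only $\lceil (n-r(k-1))/(r-1)\rceil$, so the gap from $r(k-1)$ to $s(k-1)$ has to be closed by a more refined equivariant invariant, and the standard $\mathbb{Z}_r$-index machinery behaves well only for $r$ a prime power (this is precisely why Alon--Drewnowski--{\L}uczak had to restrict to $r$ a power of $2$). Overcoming this would likely require either a purely combinatorial extension of Jonsson's and Chen's arguments beyond $r=2$, or a genuinely new equivariant obstruction valid for non-free actions of composite $r$; in either direction, as the excerpt itself notes, the case $s=3$ is known to be especially recalcitrant and will presumably be the hardest residue of the program.
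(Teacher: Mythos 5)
There is a genuine gap, and it is worth being precise about its status: the statement you were asked to prove is stated in the paper as a \emph{conjecture} (Meunier's conjecture), not as a theorem, and the paper contains no proof of it. What the paper actually establishes is only the upper bound (Lemma \ref{l00001}, via the coloring $S\mapsto\lceil \min(S)/(r-1)\rceil$, which covers the $s$-stable case since $\binom{[n]}{k}_{s\text{-}stab}\subseteq\binom{[n]}{k}_{\widetilde{s\text{-}stab}}$) together with partial lower bounds: the Dol'nikov--K\v{r}\'{\i}\v{z} bound and, for prime $p$, Theorem \ref{t00002}, which gives $\chi\left(KG^{p}(n,k)_{s\text{-}stab}\right)\geq\left\lceil\frac{n-(p+s-2)(k-1)-(s-1)}{p-1}\right\rceil$. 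For $s>r$ this is strictly below the conjectured value $\left\lceil\frac{n-s(k-1)}{r-1}\right\rceil$ by roughly $\frac{(r-2)(k-1)+(s-1)}{r-1}$, so even the new topological machinery of the paper does not close the gap for the $s$-stable hypergraphs; the conjecture is only known in the special cases cited (Jonsson, Chen, Alon--Drewnowski--{\L}uczak for $r$ a power of $2$, Frick for $s=2$ or $r>6s-6$ a prime power), and the full $r=2$, $s=3$ stable case remains open.

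Measured against that, your proposal is a research program rather than a proof. Your upper bound is fine (and for $s>r$ the truncation by $\lceil(n-s(k-1))/(r-1)\rceil$ is unnecessary, since $s$-stability already forces $a_{1}\leq n-s(k-1)$; properness is simply that $r$ pairwise disjoint sets have $r$ distinct minima, which cannot all lie in a window of $r-1$ consecutive integers --- your ``gaps accommodating $s$-stability'' justification is not the operative point). But the lower bound, which is the entire content of the conjecture when $s>r$, is left as a sketch: you do not specify the complex of $\mathbb{Z}_{r}$-labelled chains, the equivariant index computation, or how $s$-stability raises the connectivity/index beyond what Dol'nikov--K\v{r}\'{\i}\v{z} sees, and you yourself concede that the method breaks down for composite $r$ (and, in fact, it is not carried out for prime $r$ either). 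Since the missing step is precisely the open problem, the proposal cannot be counted as a correct proof; it is an accurate description of why the problem is hard, consistent with, but not exceeding, what the paper and the literature already provide.
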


The case $s=r$ is the Alon-Drewnowski-\L uczak-Ziegler conjecture \cite%
{ADL09,Ziegler02,Ziegler06}. Conjecture \ref{c0001} is a strengthened
version of Conjecture \ref{c000}.

Ziegler \cite{Ziegler02,Ziegler06} pointed out that for positive integes $%
n,k,r,s$ with $r\geq 2$, $k\geq 2$, and $n\geq sk$, cd$^{r}\binom{\left[ n%
\right] }{k}_{s\text{-}stab}=\max (\{n-rs(k-1),0\})$. Since $KG^{r}(n,k)_{s%
\text{-}stab}$ is an induced subhypergraph of $KG^{r}(n,k)_{\widetilde{s%
\text{-}stab}}$, cd$^{r}\binom{\left[ n\right] }{k}_{\widetilde{s\text{-}stab%
}}\geq $ cd$^{r}\binom{\left[ n\right] }{k}_{s\text{-}stab}$. We can easily
obtain the following $r$-colorability defect result of $\binom{\left[ n%
\right] }{k}_{\widetilde{s\text{-}stab}}$ with the same proof work of
Ziegler \cite{Ziegler02,Ziegler06}.

\begin{lemma}
\label{l10} Let $n,k,r$,and $s$ be positive integers such that $r\geq 2$, $%
k\geq 2$, and $n\geq sk$. Then cd$^{r}\binom{\left[ n\right] }{k}_{%
\widetilde{s\text{-}stab}}=$ cd$^{r}\binom{\left[ n\right] }{k}_{s\text{-}%
stab}=\max (\{n-rs(k-1),0\})$.
\end{lemma}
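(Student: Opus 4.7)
The plan is to sandwich $\text{cd}^{r}\binom{[n]}{k}_{\widetilde{s\text{-}stab}}$ between two copies of $\max(n-rs(k-1),0)$. Since every $s$-stable $k$-subset of $[n]$ is in particular almost $s$-stable, the edge sets satisfy $\binom{[n]}{k}_{s\text{-}stab} \subseteq \binom{[n]}{k}_{\widetilde{s\text{-}stab}}$ on the common vertex set $[n]$. Enlarging the edge set of a hypergraph cannot decrease its $r$-colorability defect, so combined with the already-quoted Ziegler computation for the $s$-stable version one immediately obtains $\text{cd}^{r}\binom{[n]}{k}_{\widetilde{s\text{-}stab}} \geq \text{cd}^{r}\binom{[n]}{k}_{s\text{-}stab} = \max(n-rs(k-1),0)$.

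For the matching upper bound I would exhibit an explicit partition. Set $X = \{rs(k-1)+1,\ldots,n\}$ if $n > rs(k-1)$ and $X = \emptyset$ otherwise, and split $[n]\setminus X$ into $r$ consecutive blocks $A_{j} = \{(j-1)s(k-1)+1,\ldots,js(k-1)\} \cap ([n]\setminus X)$ for $j = 1,\ldots,r$. Each $A_{j}$ lies in an integer interval of length at most $s(k-1)$, so any $k$ elements $a_{1} < \cdots < a_{k}$ of $A_{j}$ satisfy $a_{k}-a_{1} \leq s(k-1)-1$, contradicting the almost $s$-stable requirement $a_{k}-a_{1} \geq (k-1)s$. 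Hence no $A_{j}$ contains an almost $s$-stable $k$-subset, and since $|X| = \max(n-rs(k-1),0)$, this bounds the defect from above.

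Combining the two inequalities yields the three-way equality asserted in the lemma. If one wishes to avoid citing Ziegler---this is presumably what the author means by ``the same proof work of Ziegler''---one can prove the lower bound intrinsically by a greedy chain argument: starting from $b_{1} = \min A$ and iteratively choosing $b_{i+1} = \min\{a \in A : a \geq b_{i}+s\}$ for any $A \subseteq [n]$ containing no almost $s$-stable $k$-subset, the resulting chain has length at most $k-1$ (otherwise it would itself be an almost $s$-stable $k$-subset inside $A$) and decomposes $A$ into segments $A\cap[b_{i},b_{i+1})$ and $A\cap[b_{\ell},n]$ of size at most $s$ each, forcing $|A| \leq s(k-1)$. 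Summing over the $r$ color classes produces $\sum_{j}|A_{j}| \leq rs(k-1)$, and hence the stated lower bound. I foresee no real obstacle beyond this routine book-keeping; both ingredients---the edge-set monotonicity and the interval-length observation---are essentially off-the-shelf.
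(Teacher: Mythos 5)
Your proposal is correct and is essentially the proof the paper intends: the lower bound is exactly the paper's monotonicity-plus-Ziegler observation, and the explicit interval partition you give for the upper bound is precisely ``the same proof work of Ziegler'' that the paper invokes. The only caveat is your closing aside: the greedy-chain argument lets you avoid citing Ziegler only for cd$^{r}\binom{\left[ n\right] }{k}_{\widetilde{s\text{-}stab}}$, while the asserted value of cd$^{r}\binom{\left[ n\right] }{k}_{s\text{-}stab}$ still rests on his cyclic computation.
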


Combining the Dol'nikov-K\v{r}\'{\i}\v{z} Theorem \ref{t00006} and Lemma \ref%
{l10}, we have that%
\begin{equation*}
\chi \left( KG^{r}(n,k)_{s\text{-}stab}\right) \geq \left\lceil \frac{%
n-rs(k-1)}{r-1}\right\rceil
\end{equation*}%
and%
\begin{equation*}
\chi \left( KG^{r}(n,k)_{\widetilde{s\text{-}stab}}\right) \geq \left\lceil 
\frac{n-rs(k-1)}{r-1}\right\rceil
\end{equation*}%
for any $n\geq rs(k-1)+\max \left( \{r,s\}\right) $. We would like to find
the substantial improvement of the Dol'nikov-K\v{r}\'{\i}\v{z} lower bounds
for $\chi \left( KG^{r}(n,k)_{s\text{-}stab}\right) $ and $\chi \left(
KG^{r}(n,k)_{\widetilde{s\text{-}stab}}\right) $.

First, we prove the following lemma, the proof follows a very similar scheme
as the proofs of Ziegler \cite{Ziegler02,Ziegler06} and Meunier \cite%
{Meunier11}.

\begin{lemma}
\label{l00001}Let $n,k,r$, and $s$ be positive integers such that $k\geq 2$, 
$r\geq 2$, and $n\geq \max (\{r,s\})k$. Then $\chi \left( KG^{r}(n,k)_{s%
\text{-}stab}\right) \leq \chi \left( KG^{r}(n,k)_{\widetilde{s\text{-}stab}%
}\right) \leq \left\lceil \frac{n-\max (\{r,s\})(k-1)}{r-1}\right\rceil $.
\end{lemma}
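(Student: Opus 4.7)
My plan is to split the lemma into the two inequalities, handle the first by an induced-subhypergraph argument, and handle the second by exhibiting an explicit proper coloring.

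For the first inequality $\chi(KG^r(n,k)_{s\text{-}stab}) \leq \chi(KG^r(n,k)_{\widetilde{s\text{-}stab}})$, I would just observe that every $s$-stable $k$-subset of $[n]$ is almost $s$-stable, so $KG^r(n,k)_{s\text{-}stab}$ is the subhypergraph of $KG^r(n,k)_{\widetilde{s\text{-}stab}}$ induced by the vertex subset $\binom{[n]}{k}_{s\text{-}stab}$. Any proper coloring of a hypergraph restricts to a proper coloring of any induced subhypergraph.

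For the second inequality, let $m=\max(\{r,s\})$ and $L=\lceil \frac{n-m(k-1)}{r-1}\rceil$. I would define the coloring $c:\binom{[n]}{k}_{\widetilde{s\text{-}stab}}\to [L]$ by
\begin{equation*}
c(S) \;=\; \min\!\left(\left\lceil \tfrac{\min(S)}{r-1}\right\rceil,\; L\right),
\end{equation*}
which manifestly uses at most $L$ colors, and then verify that no edge of the hypergraph is monochromatic. So suppose $S_1,\dots,S_r$ are pairwise disjoint almost $s$-stable $k$-subsets with common color $c$; the key input is that the minimums $a_j=\min(S_j)$ are then $r$ pairwise distinct integers. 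If $c<L$ the ceiling is not capped, so each $a_j$ lies in $[(c-1)(r-1)+1,\,c(r-1)]$, an interval containing only $r-1$ integers, a contradiction. If $c=L$, then $a_j\geq (L-1)(r-1)+1$ for every $j$, and here I split on which value of $m$ is active.

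The heart of the argument, and the step where one must be most careful, is the case $c=L$, because the natural pigeonhole on minimums used for $c<L$ no longer automatically applies. When $m=r\geq s$, I would argue via total element count: all $rk$ elements of $\bigcup_j S_j$ lie in $[(L-1)(r-1)+1,n]$, an interval of $n-(L-1)(r-1)\leq r(k-1)+(r-1)=rk-1$ integers (using $L(r-1)\geq n-r(k-1)$), which is too small. When $m=s\geq r$, I would use almost $s$-stability to force $a_j\leq n-s(k-1)$, so each $a_j$ lies in $[(L-1)(r-1)+1,\,n-s(k-1)]$, and $L(r-1)\geq n-s(k-1)$ again shrinks this window to at most $r-1$ integers, giving the pigeonhole contradiction. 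Combining the two subcases closes the verification, and the coloring scheme is a direct adaptation of the one used by Ziegler and Meunier, so I expect no further delicate issues beyond correctly handling this capped case for both ranges of $m$.
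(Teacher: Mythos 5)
Your proof is correct, and it differs from the paper's in one genuine respect. The paper splits into two cases at the outset: when $r>s$ it does not construct a coloring at all, but simply notes that $KG^{r}(n,k)_{\widetilde{s\text{-}stab}}$ is an induced subhypergraph of $KG^{r}(n,k)$ and quotes $\chi\left(KG^{r}(n,k)\right)=\left\lceil \frac{n-r(k-1)}{r-1}\right\rceil$ (Alon--Frankl--Lov\'asz); when $r\leq s$ it uses exactly your coloring $S\mapsto\left\lceil \frac{\min(S)}{r-1}\right\rceil$, where almost $s$-stability gives $\min(S)\leq n-s(k-1)$, so no cap is needed and properness is the plain pigeonhole on minima. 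Your version unifies the two regimes with the capped coloring $\min\left(\left\lceil \frac{\min(S)}{r-1}\right\rceil, L\right)$ and handles the top color class directly: for $m=s$ the cap is in fact never active (your bound $a_j\leq n-s(k-1)$ forces $\lceil a_j/(r-1)\rceil\leq L$), so there you recover the paper's argument verbatim, while for $m=r$ your element-count $rk>n-(L-1)(r-1)$ is precisely the standard proof of the easy (upper-bound) half of the Alon--Frankl--Lov\'asz result, restricted to the almost stable vertices. So what your route buys is self-containedness -- no appeal to the chromatic number of $KG^{r}(n,k)$, whose cited equality is a deep theorem even though only its trivial direction is used -- at the cost of the extra case analysis for the capped color; the paper's route is shorter but citation-dependent. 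Both arguments are sound, including your handling of the first inequality by restriction of a proper coloring to the induced subhypergraph on $\binom{[n]}{k}_{s\text{-}stab}$, which matches the paper.
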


\begin{proof}
Since $KG^{r}(n,k)_{\widetilde{s\text{-}stab}}$ is an induced subhypergraph
of $KG^{r}(n,k)$. If $r>s$, then $\chi \left( KG^{r}(n,k)_{s\text{-}%
stab}\right) \leq \chi \left( KG^{r}(n,k)_{\widetilde{s\text{-}stab}}\right)
\leq \chi \left( KG^{r}(n,k)\right) =\left\lceil \frac{n-r(k-1)}{r-1}%
\right\rceil =\left\lceil \frac{n-\max (\{r,s\})(k-1)}{r-1}\right\rceil $.

Assume that $r\leq s$. Let $t=\left\lceil \frac{n-s(k-1)}{r-1}\right\rceil $%
. Define the map 
\begin{equation*}
c:\binom{\left[ n\right] }{k}_{\widetilde{s\text{-}stab}}\longrightarrow
\lbrack t]
\end{equation*}%
as follows:

Assume $S=\{i_{1},i_{2},\ldots ,i_{k}\}\in \binom{\left[ n\right] }{k}_{%
\widetilde{s\text{-}stab}}$ with $1\leq i_{1}<i_{2}<\cdots <i_{k}\leq n$.
Since $S$ is almost $s$-stable, $i_{k-j}\leq n-sj$ for $j=0,\ldots ,k-1$.
Then we define its color%
\begin{equation*}
c(S):=\left\lceil \frac{i_{1}}{r-1}\right\rceil =\left\lceil \frac{\min (S)}{%
r-1}\right\rceil \text{.}
\end{equation*}%
Thus we obtain a value $c(S)$ in $[t]$. One can easily check that $c$ is a
proper coloring of $KG^{r}\left( n,k\right) _{\widetilde{s\text{-}stab}}$.
The proof is complete.
\end{proof}

It is obvious that the $s$-stable $r$-uniform\ Kneser hypergraph $%
KG^{r}(n,k)_{s\text{-}stab}$ is an induced subhypergraph of the almost $s$%
-stable $r$-uniform\ Kneser hypergraph $KG^{r}\left( n,k\right) _{\widetilde{%
s\text{-}stab}}$. Combining Lemma \ref{l00001}, Conjecture \ref{c000}, and
Conjecture \ref{c0001}, we propose two conjectures with related to the
almost $s$-stable $r$-uniform\ Kneser hypergraph $KG^{r}\left( n,k\right) _{%
\widetilde{s\text{-}stab}}$ as follows.

\begin{conjecture}
\label{c0002}Let $n,k$, and $r$ be positive integers such that $k\geq 2$, $%
r\geq 2$, and $n\geq rk$. Then $\chi \left( KG^{r}(n,k)_{\widetilde{r\text{-}%
stab}}\right) =\left\lceil \frac{n-r(k-1)}{r-1}\right\rceil =\chi \left(
KG^{r}(n,k)\right) $.
\end{conjecture}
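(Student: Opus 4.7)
The upper bound $\chi\bigl(KG^r(n,k)_{\widetilde{r\text{-}stab}}\bigr) \le \lceil (n-r(k-1))/(r-1) \rceil$ is already supplied by Lemma \ref{l00001} specialized to $s=r$ (and would also follow from the fact that $KG^r(n,k)_{\widetilde{r\text{-}stab}}$ is an induced subhypergraph of $KG^r(n,k)$ combined with the Alon-Frankl-Lov\'{a}sz theorem). The whole content of the conjecture is therefore the matching lower bound
\[
\chi\bigl(KG^r(n,k)_{\widetilde{r\text{-}stab}}\bigr) \;\ge\; \Bigl\lceil \tfrac{n-r(k-1)}{r-1} \Bigr\rceil .
\]
Plugging Lemma \ref{l10} into Theorem \ref{t00006} only yields $\lceil (n-r^{2}(k-1))/(r-1)\rceil$, which is off by a factor of $r$ in the $(k-1)$ correction, so the $r$-colorability defect alone is not strong enough and a finer topological invariant has to be brought in.

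My plan is to adapt the Sarkaria-Ziegler-Meunier style $\mathbb{Z}_r$-equivariant argument that proves the Alon-Frankl-Lov\'{a}sz theorem and its $r$-stable refinement, but to run it with sign patterns on the path $[n]$ rather than on the cycle, so that the stability extracted by the topology is automatically the almost $r$-stable one. For $r$ prime, set $t := \lceil (n-r(k-1))/(r-1)\rceil$, assume for contradiction a proper coloring $c:\binom{[n]}{k}_{\widetilde{r\text{-}stab}} \to [t-1]$ exists, and work with the free $\mathbb{Z}_r$-sphere $S := (\mathbb{Z}_r)^{*n}$ of dimension $n-1$. I would interpret each point $x \in S$ as a partial labelling of $[n]$ by elements of $\mathbb{Z}_r$, producing an ordered $r$-partition $A_1(x),\dots,A_r(x)$ of a subset of $[n]$. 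A greedy left-to-right interval packing on the path should extract from each $A_j(x)$ a canonical almost $r$-stable $k$-subset $S_j(x)$; the pairs $(c(S_j(x)), j)$ then assemble into a $\mathbb{Z}_r$-equivariant map from $S$ to a free $\mathbb{Z}_r$-space whose $\mathbb{Z}_r$-index is strictly less than $n-1$ precisely when $t-1 < (n-r(k-1))/(r-1)$, contradicting Dold's theorem and ruling out a $(t-1)$-colouring. Passing from prime $r$ to arbitrary $r$ would then follow via the standard $\mathbb{Z}_p$-subgroup reduction of Ziegler \cite{Ziegler02,Ziegler06}.

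The step I expect to be the main obstacle is the equivariant extraction: the $r$-tuple $(S_1(x),\dots,S_r(x))$ must vary in a piecewise-linear way with $x$, commute with the $\mathbb{Z}_r$-relabelling of the parts, sit inside the prescribed $A_j(x)$, and individually satisfy the gap-$\ge r$ condition on the path $[n]$, while the points $x$ for which some $A_j(x)$ is too small to accommodate a $k$-subset have to be routed to the non-free stratum of the target so that the resulting map is well defined on all of $S$. The path structure is an advantage compared with the cyclic situation because no wrap-around ambiguity arises and a greedy scan from $1$ to $n$ with lexicographic tiebreaking supplies a canonical almost $r$-stable subset inside each part; the delicate point is the combinatorial bookkeeping that guarantees, simultaneously in all $r$ parts $A_j(x)$, enough room for an almost $r$-stable $k$-subset, and this is exactly the place where the hypothesis $n \ge (r-1)(t-1) + r(k-1) + 1$ equivalent to $t \le \lceil (n-r(k-1))/(r-1)\rceil$ must be used sharply.
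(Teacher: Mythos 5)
The statement you are asked to prove is not proved in the paper at all: it is stated as Conjecture~\ref{c0002}, and the paper's own contributions around it are only the upper bound (Lemma~\ref{l00001} with $s=r$), the equivalence with Conjecture~\ref{c0004} (Theorem~\ref{t07}, via Lemma~\ref{l12}), a weaker topological lower bound for prime $r$ (Theorem~\ref{t00001}), and a complete resolution only in the case $r=2$ (Theorem~\ref{t003}). Your proposal correctly isolates the lower bound as the whole issue, but what you offer is a plan, not a proof, and the plan's ``main obstacle'' is precisely the open problem. Concretely: the $\mathbb{Z}_p$-equivariant scheme you describe (partition $[n]$ into parts $A_1(x),\dots,A_r(x)$, extract an almost $r$-stable $k$-subset from a part when possible, route the remaining points elsewhere) is exactly what the paper's proof of Theorem~\ref{t00001} implements via the $Z_p$-Tucker lemma, and the routing of the ``deficient'' configurations costs an additive $(s-1)(k-1)$ in the dimension count. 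With $s=r=p$ this yields only $\chi\geq\left\lceil \frac{n-(2p-2)(k-1)}{p-1}\right\rceil$, which falls short of the conjectured $\left\lceil \frac{n-p(k-1)}{p-1}\right\rceil$ for every $p\geq 3$ (they coincide only at $p=2$, which is why the paper can settle $r=2$). Your assertion that the assembled map has $\mathbb{Z}_r$-index below $n-1$ ``precisely when $t-1<(n-r(k-1))/(r-1)$'' is exactly the claim that needs a new idea; nothing in the greedy left-to-right extraction you sketch shows how to avoid the $(r-1)(k-1)$-sized loss that all known arguments of this type incur.

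A second genuine gap is the final sentence: passing from prime $r$ to arbitrary $r$ is \emph{not} a standard reduction in the stable or almost-stable setting. The K\v{r}\'{\i}\v{z}--Ziegler composition arguments that reduce the Alon--Frankl--Lov\'{a}sz theorem to primes do not preserve the (almost) $r$-stability constraint; this is exactly why Alon, Drewnowski and \L uczak could confirm Conjecture~\ref{c000} only for $r$ a power of $2$, and why the paper's own transfer result (Lemma~\ref{l12}) goes in the opposite direction, deducing the statement for smaller $r$ from larger $r_1$ with $s\geq r_1$. So even if your equivariant step worked for primes, the composite case would still be unresolved. As it stands, the proposal reproduces the paper's Theorem~\ref{t00001} strategy without the quantitative improvement that would be needed, and the conjecture remains open for $r\geq 3$.
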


\begin{conjecture}
\label{c0004}Let $n,k,r$, and $s$ be positive integers such that $k\geq 2$, $%
r\geq 2$, and $n\geq \max (\{r,s\})k$. Then $\chi \left( KG^{r}(n,k)_{%
\widetilde{s\text{-}stab}}\right) =\left\lceil \frac{n-\max (\{r,s\})(k-1)}{%
r-1}\right\rceil $.
\end{conjecture}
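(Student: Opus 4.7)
The upper bound $\chi(KG^{r}(n,k)_{\widetilde{s\text{-}stab}}) \le \lceil (n-\max(\{r,s\})(k-1))/(r-1)\rceil$ is already supplied by Lemma~\ref{l00001}, so the entire content of Conjecture~\ref{c0004} lies in the matching lower bound, which is where my plan is concentrated.

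A preliminary, essentially cost-free reduction is the following. Since $\binom{[n]}{k}_{s\text{-}stab}\subseteq \binom{[n]}{k}_{\widetilde{s\text{-}stab}}$ always, and $\binom{[n]}{k}_{\widetilde{r\text{-}stab}}\subseteq \binom{[n]}{k}_{\widetilde{s\text{-}stab}}$ when $s\le r$ (because an almost $r$-stable set is automatically almost $s$-stable), the hypergraph $KG^{r}(n,k)_{\widetilde{s\text{-}stab}}$ contains $KG^{r}(n,k)_{s\text{-}stab}$ and $KG^{r}(n,k)_{\widetilde{r\text{-}stab}}$ as induced subhypergraphs; since the chromatic number is monotone under this relation, Conjecture~\ref{c0001} (handling the range $s\ge r$) and Conjecture~\ref{c0002} (handling the range $s\le r$) would together imply Conjecture~\ref{c0004}. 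Both background conjectures are themselves open, however, so my plan for a direct attack addresses the almost $s$-stable lower bound head-on.

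The direct attack aims to replace the Dol'nikov--K\v{r}\'{\i}\v{z} lower bound $\lceil (n-rs(k-1))/(r-1)\rceil$ supplied by Theorem~\ref{t00006} together with Lemma~\ref{l10} by the conjectured $\lceil (n-\max(\{r,s\})(k-1))/(r-1)\rceil$. The standard recipe, following Sarkaria, Ziegler, and Meunier, unfolds as follows: (i) choose a prime $p$ dividing $r$; (ii) cellulate a free $\mathbb{Z}_{p}$-space $X$ (a sphere, a join of spheres, or a deleted join) so that each cell encodes an almost $s$-stable $k$-subset together with a ``sign'' carried by the $\mathbb{Z}_p$-action; (iii) convert a hypothetical proper $(t-1)$-coloring into a $\mathbb{Z}_p$-equivariant continuous map $X\to\mathbb{R}^{t-1}\setminus\{0\}$; (iv) invoke an appropriate $\mathbb{Z}_p$-Tucker lemma or equivariant Borsuk-Ulam theorem to obtain a contradiction, which materializes as $r$ pairwise disjoint almost $s$-stable $k$-subsets receiving a common color.

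The main obstacle, and the reason only the $r=2$ case is announced in the abstract, is step~(ii). In the $s$-stable setting one exploits the $n$-fold cyclic symmetry on the $n$-cycle; in the almost $s$-stable setting that symmetry is absent, so the cellulation has to be built from the path structure of $[n]$ and must still be compatible with the $\mathbb{Z}_p$-action while preserving almost $s$-stability on every cell. For $r=2$ one can get away with a $\mathbb{Z}_2$-equivariant, path-style sign decomposition matched with Tucker's lemma, which is enough to deliver $\chi(KG^{2}(n,k)_{\widetilde{s\text{-}stab}}) = n - s(k-1)$ for every $s\ge 2$, recovering the strengthened form stated in the abstract. For $r\ge 3$ I expect one needs a genuinely new path analogue of Ziegler's cyclic cellulation paired with an $r$-partite Tucker lemma, and this is precisely the step at which the proof is likely to stall on the full Conjecture~\ref{c0004}.
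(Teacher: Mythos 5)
The statement you are addressing is Conjecture \ref{c0004}, which the paper itself leaves open: it is stated as a conjecture, and the paper establishes it only in special cases --- the case $r=2$ (Theorem \ref{t003}, via Lemma \ref{l00001} and Theorem \ref{t00001} with $p=2$), the cases $s=\alpha 2^{q}$ with $2^{q}\geq r$ (the corollary obtained from Chen's work together with Lemmas \ref{l00001} and \ref{l12}), and the equivalence with Conjecture \ref{c0002} (Theorem \ref{t07}). Your proposal likewise does not prove the statement; it is a strategy outline that openly concedes it stalls at the decisive step. The parts of your outline that are correct essentially reproduce the paper's framing: the upper bound is indeed Lemma \ref{l00001}; the monotonicity reductions showing that Conjecture \ref{c0001}, or Conjecture \ref{c0002}, would imply Conjecture \ref{c0004} are sound and parallel Theorem \ref{t07}; and the equivariant-topology route you sketch is the one the paper actually follows for prime $p$ in Theorem \ref{t00001}.

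The genuine gap is that steps (ii)--(iv) of your plan are never carried out, and even your claim for $r=2$ is asserted rather than proved. The whole technical content of the paper's $r=2$ result is precisely the construction you wave at: a $Z_p$-equivariant labeling $\lambda$ built from the auxiliary functions $\mathcal{I}(X)$, $\mathcal{C}(X)$, $\mathcal{M}(X)$, whose second coordinate encodes the residue of $\max(X)$ modulo $s-1$ so that the first $(s-1)(k-1)$ labels satisfy condition (i) of the $Z_p$-Tucker Lemma \ref{Zp-Tucker}, followed by a verification of condition (ii) using properness of the coloring; none of this appears in your proposal, so the $r=2$ case is not established as written. For $r\geq 3$ the situation is worse than you indicate: even executing the generic prime-$p$ version of your recipe yields, as in Theorem \ref{t00001}, only $\chi\left(KG^{p}(n,k)_{\widetilde{s\text{-}stab}}\right)\geq \lceil (n-(p+s-2)(k-1))/(p-1)\rceil$, which is strictly weaker than the conjectured $\lceil (n-\max(\{p,s\})(k-1))/(p-1)\rceil$ whenever $p\geq 3$ and $s\geq 3$, and your plan says nothing about composite $r$ beyond the (unproved) background conjectures. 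So the proposal cannot be accepted as a proof of the statement; at best it restates the problem, its known upper bound, and its known reductions.
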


\section{Main results}

In this section, we first discuss some proper coloring results about the
chromatic number of $KG^{r}(n,k)_{\widetilde{s\text{-}stab}}$ for $r\leq s$.
In addition, we show that Conjecture \ref{c0002} is equivalent to Conjecture %
\ref{c0004}. Next, we obtain a topological lower bound of $\chi \left(
KG^{p}(n,k)_{\widetilde{s\text{-}stab}}\right) $ for prime $p$. Finally, we
completely confirm the chromatic number of almost $s$-stable Kneser graphs
is equal to $n-s(k-1)$ for all $s\geq 2$. Let $t=n-s(k-1)$. We also show
that for any proper coloring of $KG^{2}\left( n,k\right) _{\widetilde{s\text{%
-}stab}}$ with $\{1,2,\ldots ,m\}$ colors ($m$ arbitrary) must contain a
completely multicolored complete bipartite subgraph $K_{\left\lceil \frac{t}{%
2}\right\rceil \left\lfloor \frac{t}{2}\right\rfloor }$. Moreover, we obtain
that the local chromatic number of the usual almost $s$-stable Kneser graphs
is at least $\left\lceil \frac{t}{2}\right\rceil +1$.

\subsection{$\protect\chi \left( KG^{r}(n,k)_{\widetilde{s\text{-}stab}%
}\right) $ for $r\leq s$}

In this subsection, we would like to investigate $\chi \left( KG^{r}(n,k)_{%
\widetilde{s\text{-}stab}}\right) $ for $r\leq s$. First, we provide the
following lemma.

\begin{lemma}
\label{l12}Let $n$, $k$, $r$, and $s$ be positive integers such that $k\geq
2 $, $r\geq 2$, $s\geq r_{1}\geq 2$, and $n\geq sk$. Assume that $\chi
\left( KG^{r_{1}}(n,k)_{\widetilde{s\text{-}stab}}\right) =\left\lceil \frac{%
n-s(k-1)}{r_{1}-1}\right\rceil $. Then for any $\ r\leq r_{1}$, we have $%
\chi \left( KG^{r}(n,k)_{\widetilde{s\text{-}stab}}\right) =\left\lceil 
\frac{n-s(k-1)}{r-1}\right\rceil $.
\end{lemma}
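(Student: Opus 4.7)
The plan is to establish the two inequalities separately. The upper bound $\chi\left(KG^{r}(n,k)_{\widetilde{s\text{-}stab}}\right) \leq \left\lceil \frac{n-s(k-1)}{r-1} \right\rceil$ is immediate from Lemma \ref{l00001}: the standing hypothesis $s \geq r_{1} \geq r$ forces $\max(\{r,s\}) = s$, so the general bound in Lemma \ref{l00001} specialises to exactly the target expression.

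For the lower bound I would argue contrapositively via a merging construction. Given a proper coloring $c : V \to [m]$ of $KG^{r}(n,k)_{\widetilde{s\text{-}stab}}$, every color class $c^{-1}(i)$ contains no $r$ pairwise disjoint almost $s$-stable $k$-subsets, since any such $r$-tuple would form a monochromatic edge. Setting $q = \left\lfloor (r_{1}-1)/(r-1) \right\rfloor$, partition $[m]$ into $\left\lceil m/q \right\rceil$ blocks of size at most $q$ and let $\tilde{c}(S)$ denote the block containing $c(S)$. Each $\tilde{c}$-class is a union of at most $q$ original color classes and hence contains at most $q(r-1) \leq r_{1}-1$ pairwise disjoint vertices; consequently $\tilde{c}$ is a proper coloring of $KG^{r_{1}}(n,k)_{\widetilde{s\text{-}stab}}$. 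The assumed value $\chi\left(KG^{r_{1}}(n,k)_{\widetilde{s\text{-}stab}}\right) = \left\lceil \frac{n-s(k-1)}{r_{1}-1} \right\rceil$ then forces $\left\lceil m/q \right\rceil \geq \left\lceil \frac{n-s(k-1)}{r_{1}-1} \right\rceil$.

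The main obstacle is converting this merged inequality into the sharp target $m \geq \left\lceil \frac{n-s(k-1)}{r-1} \right\rceil$. A direct rearrangement yields only $m \geq q\bigl(\left\lceil \frac{n-s(k-1)}{r_{1}-1} \right\rceil - 1\bigr) + 1$, which can fall a constant short of the desired bound when $(r-1) \nmid (r_{1}-1)$ or when the ceilings interact unfavorably with $n-s(k-1)$ (for instance, when $r=2$, $r_{1}=3$ and $n-s(k-1)$ is even the crude bound only gives $m \geq n-s(k-1)-1$). To close this gap I would refine the partition of $[m]$ so as to exploit the additional structure of almost $s$-stable subsets, for example by ordering the colors according to the minimum element of the sets they contain, which necessarily lies in $\{1,\ldots,n-(k-1)s\}$; alternatively, one can iterate the construction one level at a time from $r_{1}$ down to $r$, tracking the slack in each rounding and using Lemma \ref{l00001} to re-seed the upper bound at each intermediate stage. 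Verifying that such a refined scheme always produces the sharp lower bound is the technical heart of the argument.
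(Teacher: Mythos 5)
Your upper bound step is fine and matches the paper (it is exactly Lemma \ref{l00001} with $\max(\{r,s\})=s$). The lower bound, however, has a genuine gap, and you have correctly located it yourself: merging $q=\lfloor (r_{1}-1)/(r-1)\rfloor$ color classes of a proper $m$-coloring of $KG^{r}(n,k)_{\widetilde{s\text{-}stab}}$ on the \emph{same} ground set $[n]$ only yields $\lceil m/q\rceil \geq \lceil (n-s(k-1))/(r_{1}-1)\rceil$, which is strictly weaker than the target (your own example $r=2$, $r_{1}=3$ shows a loss that no bookkeeping of the rounding can repair, since the information destroyed by merging is real, not an artifact of ceilings). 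The proposed fixes --- ordering colors by minimum elements, or iterating one uniformity level at a time --- do not address this: iterating the same merging argument loses a constant at every level, and nothing in the structure of almost $s$-stable sets is brought to bear. So the ``technical heart'' you defer is precisely the missing idea, and the proof as proposed does not close.

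The paper's proof uses a different reduction that avoids the loss entirely: it enlarges the ground set rather than merging colors. From a proper $t$-coloring $c$ of $KG^{r}(n,k)_{\widetilde{s\text{-}stab}}$ one builds a proper $t$-coloring $f$ of $KG^{r_{1}}(\overline{n},k)_{\widetilde{s\text{-}stab}}$ with $\overline{n}=n+(r_{1}-r)t$, by setting $f(S)=c(S)$ when $S\subseteq[n]$ and $f(S)=\lceil (\max(S)-n)/(r_{1}-r)\rceil$ when $\max(S)>n$. A monochromatic $r_{1}$-tuple would then contain at most $r-1$ sets inside $[n]$ (since $c$ is proper) and at most $r_{1}-r$ sets with maxima above $n$ (since those maxima are distinct and fall in an interval of length $r_{1}-r$), a total of at most $r_{1}-1$, a contradiction. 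Applying the assumed formula to $\overline{n}$ gives $t\geq \lceil (\overline{n}-s(k-1))/(r_{1}-1)\rceil$, and substituting $\overline{n}=n+(r_{1}-r)t$ gives exactly $(r-1)t\geq n-s(k-1)$ with no rounding slack. (Note this requires reading the hypothesis as the chromatic-number formula for the $r_{1}$-uniform hypergraph on the larger ground set $\overline{n}$ as well, which is how the lemma is applied in the paper; your fixed-$n$ reading is the honest one for your merging argument but it cannot reach the sharp bound.) If you want to salvage your write-up, replace the merging construction by this padding construction.
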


\begin{proof}
Let $r$ be a positive integer such that $r_{1}>r\geq 2$. It suffices to show
that $\chi \left( KG^{r}(n,k)_{\widetilde{s\text{-}stab}}\right)
=\left\lceil \frac{n-s(k-1)}{r-1}\right\rceil $. Assume that $KG^{r}(n,k)_{%
\widetilde{s\text{-}stab}}$ is properly colored with $t$ colors. Set $%
\overline{n}=n+(r_{1}-r)t\geq n\geq sk$. There exists a proper $t$-coloring $%
c:\binom{\left[ n\right] }{k}_{\widetilde{s\text{-}stab}}\longrightarrow %
\left[ t\right] $ of the almost $s$-stable $r$-uniform Kneser hypergraph $%
KG^{r}(n,k)_{\widetilde{s\text{-}stab}}$. Now we construct a proper $t$%
-coloring $f$ of the almost $s$-stable $r_{1}$-uniform Kneser hypergraph $%
KG^{s}(\overline{n},k)_{\widetilde{s\text{-}stab}}$. \newline
Let the map 
\begin{equation*}
f:\binom{\left[ \overline{n}\right] }{k}_{\widetilde{s\text{-}stab}%
}\longrightarrow \left[ t\right]
\end{equation*}%
be defined as follows:

Assume $S=\{i_{1},i_{2},\ldots ,i_{k}\}\in \binom{\left[ \overline{n}\right] 
}{k}_{\widetilde{s\text{-}stab}}$ with $1\leq i_{1}<i_{2}<\cdots <i_{k}\leq 
\overline{n}$. Since $S$ is almost $s$-stable, $i_{k-j}\leq \overline{n}-sj$
for $j=0,\ldots ,k-1$.

\textbf{Case I.} $S\subseteq \lbrack n]$, that is $\max \left( S\right) \leq
n$. We know that if $S\in \binom{\left[ \overline{n}\right] }{k}_{\widetilde{%
s\text{-}stab}}$ and $S\subseteq \lbrack n]\subseteq \left[ \overline{n}%
\right] $, then $S\in \binom{\left[ n\right] }{k}_{\widetilde{s\text{-}stab}%
} $. So we set%
\begin{equation*}
f(S)=c(S)\text{.}
\end{equation*}%
Clearly, $f$ is well-defined. Hence we obtain a value $f(S)$ in $\left[ t%
\right] $.

\textbf{Case II.} $S\backslash \lbrack n]\neq $ \O , that is, $\max \left(
S\right) >n$. Then set%
\begin{equation*}
f(S)=\left\lceil \frac{\max \left( S\right) -n}{r_{1}-r}\right\rceil \text{.}
\end{equation*}%
One can easily check that $f(S)\leq \left\lceil \frac{\overline{n}-n}{r_{1}-r%
}\right\rceil =t$. Thus, we obtain a value $f(S)$ in $\left[ t\right] $.%
\newline

We claim that $f$ is a proper $t$-coloring of $KG^{r_{1}}(\overline{n},k)_{%
\widetilde{s\text{-}stab}}$. Suppose to contrary that there are $r_{1}$
pairwise disjoint $k$-subsets $T_{1},T_{2},\ldots ,T_{r_{1}}\in \binom{\left[
\overline{n}\right] }{k}_{\widetilde{s\text{-}stab}}$\ and $m\in \lbrack t]$
such that $f(T_{1})=f(T_{2})=\cdots =f(T_{r_{1}})=m$. Without lose of
generality let $\max \left( T_{1}\right) <\max \left( T_{2}\right) <\cdots
<\max \left( T_{r_{1}}\right) $. We have $\max \left( T_{r_{1}}\right)
>\cdots >\max \left( T_{r}\right) >n$, otherwise $T_{1},T_{2},\ldots
,T_{r}\in \binom{\left[ n\right] }{k}_{\widetilde{s\text{-}stab}}$ and $%
c(T_{1})=c(T_{2})=\cdots =c(T_{r})=m$ by the definition of $f$. It is
impossible since $c$ is a proper coloring of $KG^{r}(n,k)_{\widetilde{s\text{%
-}stab}}$. Let $q\in \lbrack r_{1}]$ such that%
\begin{equation*}
\max \left( T_{i}\right) \leq n\text{ for }1\leq i\leq q\text{,}
\end{equation*}%
and%
\begin{equation*}
\max \left( T_{i}\right) >n\text{ for }q+1\leq i\leq r_{1}\text{.}
\end{equation*}%
Hence $q<r$. We know that $T_{1},T_{2},\ldots ,T_{q}\in \binom{\left[ n%
\right] }{k}_{\widetilde{s\text{-}stab}}$. By definiton of $f$, we have $%
f\left( T_{i}\right) =c\left( T_{i}\right) =m$ for $1\leq i\leq q$, and $%
f\left( T_{i}\right) =\left\lceil \frac{\max \left( T_{i}\right) -n}{r_{1}-r}%
\right\rceil =m$ for $q+1\leq i\leq r_{1}$. Since $q<r$, and then $%
r_{1}-q>r_{1}-r$. It means that there are more than $r_{1}-r$ distinct
positive integers $\max \left( T_{q+1}\right) ,\max \left( T_{q+2}\right)
,\ldots ,\max \left( T_{r_{1}}\right) $ such that $\left\lceil \frac{\max
\left( T_{q+1}\right) -n}{r_{1}-r}\right\rceil =\left\lceil \frac{\max
\left( T_{q+2}\right) -n}{r_{1}-r}\right\rceil =\cdots =\left\lceil \frac{%
\max \left( T_{r_{1}}\right) -n}{r_{1}-r}\right\rceil $ $=m$. It is
impossible. So we are done.\newline

Hence, $t\geq \left\lceil \frac{\overline{n}-s(k-1)}{r_{1}-1}\right\rceil $.
It follows that $t\geq \frac{n+(r_{1}-r)t-s(k-1)}{r_{1}-1}%
\Longleftrightarrow (r_{1}-1)t\geq n+(r_{1}-r)t-s(k-1)\Leftrightarrow
(r-1)t\geq n-s(k-1)$. Hence, we have 
\begin{equation*}
t\geq \left\lceil \frac{n-s(k-1)}{r-1}\right\rceil \text{,}
\end{equation*}%
that is, $\chi \left( KG^{r}(n,k)_{\widetilde{s\text{-}stab}}\right) \geq
\left\lceil \frac{n-s(k-1)}{r-1}\right\rceil $. From Lemma \ref{l00001}, we
conclude that $\chi \left( KG^{r}(n,k)_{\widetilde{s\text{-}stab}}\right)
=\left\lceil \frac{n-s(k-1)}{r-1}\right\rceil $.
\end{proof}

Conjecture \ref{c0001} is confirmed by Jonsson \cite{Jonsson12} for $r=2^{p}$
and $s=2^{q}$ with positive integers $p\leq q$, and by Chen \cite{Pengan15}
for $r$ is a power of $2$ and $s$ is a multiple of $r$. Combining Chen's
work \cite{Pengan15}, Lemma \ref{l00001}, and Lemma \ref{l12}, we obtain the
following immediate consequence about Conjecture \ref{c0004}:

\begin{corollary}
Let $n,k$, $q$, $r$ and $\alpha $ be positive integers such that $k\geq 2$
and $r\geq 2$. Assume that $2^{q}\geq r$. Then $\chi \left( KG^{r}(n,k)_{%
\widetilde{\alpha 2^{q}\text{-}stab}}\right) =\left\lceil \frac{n-\alpha
2^{q}(k-1)}{r-1}\right\rceil $ for any $n\geq \alpha 2^{q}k$.
\end{corollary}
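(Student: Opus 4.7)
The plan is to chain three already-established facts: Chen's confirmation of Conjecture \ref{c0001} for $r$ a power of $2$ and $s$ a multiple of $r$, the sandwich inequality in Lemma \ref{l00001}, and the downward propagation in $r$ provided by Lemma \ref{l12}. Set $s=\alpha 2^{q}$ and $r_{1}=2^{q}$. The corollary's hypothesis $2^{q}\geq r\geq 2$ together with $\alpha\geq 1$ gives $s=\alpha 2^{q}\geq 2^{q}=r_{1}\geq 2$, so $\max(\{r_{1},s\})=s$, and $r_{1}$ is a power of $2$ with $s$ a multiple of $r_{1}$. The hypothesis $n\geq \alpha 2^{q}k=sk$ is also exactly what Lemma \ref{l12} requires.

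The first step is to invoke Chen's theorem (as cited in the paragraph preceding the corollary) at the parameter triple $(n,k,r_{1})$ with stability $s$: this yields
\begin{equation*}
\chi\bigl(KG^{r_{1}}(n,k)_{s\text{-}stab}\bigr)=\left\lceil\frac{n-\max(\{r_{1},s\})(k-1)}{r_{1}-1}\right\rceil=\left\lceil\frac{n-s(k-1)}{r_{1}-1}\right\rceil.
\end{equation*}
The second step is to transfer this equality from the $s$-stable case to the almost $s$-stable case by squeezing with Lemma \ref{l00001}:
\begin{equation*}
\left\lceil\frac{n-s(k-1)}{r_{1}-1}\right\rceil=\chi\bigl(KG^{r_{1}}(n,k)_{s\text{-}stab}\bigr)\leq\chi\bigl(KG^{r_{1}}(n,k)_{\widetilde{s\text{-}stab}}\bigr)\leq\left\lceil\frac{n-s(k-1)}{r_{1}-1}\right\rceil,
\end{equation*}
forcing equality throughout. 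Hence $\chi(KG^{r_{1}}(n,k)_{\widetilde{s\text{-}stab}})=\lceil (n-s(k-1))/(r_{1}-1)\rceil$.

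The third and final step is to apply Lemma \ref{l12} with this $r_{1}=2^{q}$: the hypotheses $s\geq r_{1}\geq 2$ and $n\geq sk$ are met, and the conclusion of that lemma states that for every $r$ with $2\leq r\leq r_{1}=2^{q}$ we have
\begin{equation*}
\chi\bigl(KG^{r}(n,k)_{\widetilde{s\text{-}stab}}\bigr)=\left\lceil\frac{n-s(k-1)}{r-1}\right\rceil=\left\lceil\frac{n-\alpha 2^{q}(k-1)}{r-1}\right\rceil,
\end{equation*}
which is precisely the statement of the corollary.

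Since every non-trivial input is imported as a black box, there is no real obstacle; the only thing to check carefully is the compatibility of hypotheses. The potentially subtle point is ensuring $\max(\{r_{1},s\})=s$ so that Chen's formula produces the denominator-free expression $n-s(k-1)$ rather than $n-r_{1}(k-1)$ matching the statement of Lemma \ref{l12}; this is automatic from $\alpha\geq 1$. No new topological or combinatorial construction is required.
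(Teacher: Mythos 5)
Your proposal is correct and follows exactly the route the paper intends: the corollary is stated there as an immediate consequence of Chen's result (Conjecture \ref{c0001} for $r$ a power of $2$ and $s$ a multiple of $r$), the sandwich of Lemma \ref{l00001}, and the downward propagation of Lemma \ref{l12}, which is precisely your chain with $r_{1}=2^{q}$ and $s=\alpha 2^{q}$. Your hypothesis checks (in particular $\max(\{r_{1},s\})=s$ and $n\geq sk$) are the only details the paper leaves implicit, and you verify them correctly.
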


Moreover, we obtain an immediate consequence of Lemma \ref{l12}.

\begin{theorem}
\label{t07}Conjecture \ref{c0002} is equivalent to Conjecture \ref{c0004}.
\end{theorem}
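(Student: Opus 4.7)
The plan is to notice that Conjecture~\ref{c0004} $\Rightarrow$ Conjecture~\ref{c0002} is tautological, obtained by specializing $s=r$, so the real content is the reverse implication. I would therefore assume Conjecture~\ref{c0002} and fix $n,k,r,s$ satisfying the hypotheses of Conjecture~\ref{c0004}. The idea is to split into two cases according to whether $r>s$ or $r\leq s$, and in each case produce a matching lower bound for $\chi(KG^{r}(n,k)_{\widetilde{s\text{-}stab}})$, the upper bound being already supplied by Lemma~\ref{l00001}.

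In the case $r>s$ one has $\max(\{r,s\})=r$. The key observation is that every almost $r$-stable $k$-subset is automatically almost $s$-stable, so $KG^{r}(n,k)_{\widetilde{r\text{-}stab}}$ is an induced subhypergraph of $KG^{r}(n,k)_{\widetilde{s\text{-}stab}}$, giving the monotonicity
$$\chi\bigl(KG^{r}(n,k)_{\widetilde{r\text{-}stab}}\bigr) \leq \chi\bigl(KG^{r}(n,k)_{\widetilde{s\text{-}stab}}\bigr).$$
By the assumed Conjecture~\ref{c0002}, the left-hand side equals $\lceil (n-r(k-1))/(r-1)\rceil$, while Lemma~\ref{l00001} (its $r>s$ branch) bounds the right-hand side above by the same quantity. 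Squeezing yields Conjecture~\ref{c0004} in this subcase.

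In the case $r\leq s$ one has $\max(\{r,s\})=s$, and the hypothesis $n\geq\max(\{r,s\})k$ becomes $n\geq sk$, which is precisely what is needed to invoke Conjecture~\ref{c0002} at the parameter $s$ in place of $r$. That gives
$$\chi\bigl(KG^{s}(n,k)_{\widetilde{s\text{-}stab}}\bigr) = \left\lceil\frac{n-s(k-1)}{s-1}\right\rceil.$$
I would then feed this into Lemma~\ref{l12} with $r_{1}=s$, which propagates the equality downward to every $r\leq s$, yielding $\chi(KG^{r}(n,k)_{\widetilde{s\text{-}stab}}) = \lceil (n-s(k-1))/(r-1)\rceil$. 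Since $s=\max(\{r,s\})$ here, this is exactly the conclusion of Conjecture~\ref{c0004}.

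I do not anticipate a genuine obstacle: all of the substantive work has already been packaged into Lemma~\ref{l00001} and especially Lemma~\ref{l12}, so the theorem reduces to bookkeeping and case analysis. The two small points that must be verified along the way are the monotonicity of the chromatic number under the inclusion $\binom{[n]}{k}_{\widetilde{r\text{-}stab}} \subseteq \binom{[n]}{k}_{\widetilde{s\text{-}stab}}$ for $r>s$, and the trivial implication $n\geq\max(\{r,s\})k \Rightarrow n\geq sk$ needed to legalize Conjecture~\ref{c0002} at parameter $s$ when $r\leq s$; both are immediate.
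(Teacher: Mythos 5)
Your proof is correct and follows essentially the same route as the paper, which presents the theorem as an immediate consequence of Lemma~\ref{l12} together with the upper bound of Lemma~\ref{l00001}: the $s=r$ specialization gives one direction, and applying Conjecture~\ref{c0002} at parameter $s$ and then Lemma~\ref{l12} (with $r_{1}=s$) gives the other when $r\leq s$. Your explicit handling of the $r>s$ case by squeezing between $\chi(KG^{r}(n,k)_{\widetilde{r\text{-}stab}})$ and the Lemma~\ref{l00001} upper bound is exactly the detail the paper leaves unstated, and it is sound.
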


So it remains an interesting issue to verify the logical equivalence between
Conjecture \ref{c000} and Conjecture \ref{c0001}.

\subsection{Topological lower bound of $\protect\chi \left( KG^{p}(n,k)_{%
\widetilde{s\text{-}stab}}\right) $ for prime $p$}

This subsection is devoted to find the topological lower bounds of the
chromatic number of $KG^{p}(n,k)_{\widetilde{s\text{-}stab}}$ and the
chromatic number of $KG^{p}(n,k)_{s\text{-}stab}$ for prime $p$. First, we
introduce the $Z_{p}$-Tucker lemma.

Throughout this paper, for any positive integer $p$, we assume that $Z_{p}$ $%
=\{\omega ,\omega ^{2},\ldots ,\omega ^{p}\}$ is the cyclic and
muliplicative group of the $p$th roots of unity. We emphasize that $0$ is
not considered as an element of $Z_{p}$. We write $(Z_{p}\cup \{0\})^{n}$
for the set of all \textit{signed subsets} of $[n]$. We define $\left\vert
X\right\vert $ to be the quantity $\left\vert \{i\in \lbrack n]:x_{i}\neq
0\}\right\vert $.

Any element $X=(x_{1},x_{2},\ldots ,x_{m})\in (Z_{p}\cup \{0\})^{n}$ can
alternatively and without further mention be denoted by a $p$-tuple $%
X=(X_{1},X_{2},\ldots ,X_{p})$ where $X_{j}:=\left\{ i\in \lbrack
n]:x_{i}=\omega ^{j}\right\} $. Note that the $X_{j}$ are then necessarily
disjoint. For two elements $X,Y\in (Z_{p}\cup \{0\})^{n}$, we denote by $%
X\subseteq Y$ the fact that for all $j\in \lbrack p]$ we have $%
X_{j}\subseteq Y_{j}$. When $X\subseteq Y$, note that the sequence of
non-zero terms in $(x_{1},x_{2},\ldots ,x_{n})$ is a subsequence of $%
(y_{1},y_{2},\ldots ,y_{n})$. For any $X\in (Z_{p}\cup \{0\})^{n}\backslash
\{(0,\ldots ,0)\}$, we write $\max \left( X\right) $ for the maximal element
of $X_{1}\cup X_{2}\cup \ldots \cup X_{p}$, that is, $\max \left( X\right)
=\max \left( X_{1}\cup X_{2}\cup \ldots \cup X_{p}\right) $.

Tucker's combinatorial lemma \cite{Tucker46} and Fan's combinatorial lemma 
\cite{Ky Fan52} are two powerful tools in combinatorial topology. The
problem of finding a lower bound for the chromatic number of general Kneser
hypergraphs via Tucker's lemma \cite{Tucker46} and Fan's lemma \cite{Ky
Fan52}\textit{\ }has been extensively studied in the literature, see \cite%
{Alishahi and Hajiabolhassan15,Alishahi17,Alishahi and
Hajiabolhassan17,Chang Liu and Zhu 13,Pengan11,Pengan15,Hajiabolhassan and
Meunier16,Liu and Zhu16,Matousek04,Meunier11,Meunier14,Sani and
Alishahi17,Simonyi and Tardos06,Simonyi and Tardos07,Simonyi and
Tardif13,Ziegler02,Ziegler06}.$\newline
$The following lemma was proposed by Meunier \cite{Meunier11}. It is a
variant of the $Z_{p}$-Tucker lemma by Ziegler \cite{Ziegler02,Ziegler06}.

\begin{lemma}[$Z_{p}$-Tucker lemma]
\label{Zp-Tucker}Let $p$ be a prime, $n,m\geq 1$, $\alpha \leq m$, and let%
\begin{equation*}
\begin{array}{cccc}
\lambda : & (Z_{p}\cup \{0\})^{n}\backslash \{(0,\ldots ,0)\} & 
\longrightarrow & Z_{p}\times \lbrack m] \\ 
& X & \mapsto & (\lambda _{1}(X),\lambda _{2}(X))%
\end{array}%
\end{equation*}%
be a $Z_{p}$-equivariant map satisfying the following properties:\newline
(i) for all $X^{(1)}\subseteq X^{(2)}\in (Z_{p}\cup \{0\})^{n}\backslash
\{(0,\ldots ,0)\}$, if $\lambda _{2}\left( X^{(1)}\right) =\lambda
_{2}\left( X^{(2)}\right) \leq \alpha $, then $\lambda _{1}\left(
X^{(1)}\right) =\lambda _{1}\left( X^{(2)}\right) $, and\newline
(ii) for all $X^{(1)}\subseteq X^{(2)}\subseteq \ldots \subseteq X^{(p)}\in
(Z_{p}\cup \{0\})^{n}\backslash \{(0,\ldots ,0)\}$, if $\lambda _{2}\left(
X^{(1)}\right) =\lambda _{2}\left( X^{(2)}\right) =\ldots =\lambda
_{2}\left( X^{(p)}\right) \geq \alpha +1$, then $\lambda _{1}\left(
X^{(i)}\right) $ are not pairwise distinct for $i=1,\ldots ,p$. \newline
Then $\alpha +(m-\alpha )(p-1)\geq n$.
\end{lemma}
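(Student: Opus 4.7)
The plan is to prove the lemma by contradiction via equivariant algebraic topology, very much in the spirit of Ziegler's original proof. Assume toward a contradiction that $\alpha+(m-\alpha)(p-1)<n$. First one identifies the combinatorial object $(Z_p\cup\{0\})^n\setminus\{(0,\ldots,0)\}$ (equipped with its natural partial order $\subseteq$) with the face poset of the $n$-fold simplicial join $(Z_p)^{*n}$, whose vertex set is $Z_p\times[n]$ and whose simplices are vertex subsets with pairwise distinct $[n]$-coordinates. This complex is $(n-2)$-connected (by the standard join-connectivity formula) and carries a free $Z_p$-action by multiplication on the first coordinate.

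Next one builds the target space as a join $W=W_1*W_2*\cdots*W_m$, with $W_j$ equal to the discrete $p$-point free $Z_p$-space $Z_p$ for $1\leq j\leq\alpha$, and $W_j$ equal to the boundary $\partial\Delta^{p-1}$ of a $(p-1)$-simplex with vertex set $Z_p$ (on which $Z_p$ acts freely by cyclic vertex permutation, freeness following because the only would-be fixed point of that permutation is the barycenter, which lies in the interior) for $\alpha+1\leq j\leq m$. A straightforward dimension count gives $\dim W=\alpha+(m-\alpha)(p-1)-1$, which is at most $n-2$ by the hypothesis, and $W$ inherits a free $Z_p$-action from its factors.

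From the map $\lambda$ one then constructs a $Z_p$-equivariant continuous map $\mu:(Z_p)^{*n}\to W$. For each nonzero signed set $X$ with nonzero positions $i_1<\cdots<i_k$, form the prefix chain $X^{(1)}\subsetneq\cdots\subsetneq X^{(k)}=X$ and compute $(g_l,c_l):=\lambda(X^{(l)})$; assign equal mass $1/k$ to each prefix, and collect the aggregate mass $t_j:=|\{l:c_l=j\}|/k$ inside $W_j$, placed at the single vertex $g(j)\in Z_p$ when $j\leq\alpha$ (which is well-defined by hypothesis (i), since all prefixes with $c_l=j$ share a common $g_l$) and distributed as a convex combination of the vertices $\{g_l:c_l=j\}$ of $\Delta^{p-1}$ when $j>\alpha$. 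Hypothesis (ii) applied to any $p$-subchain of those prefixes ensures that this combination misses at least one vertex of $\Delta^{p-1}$, so it genuinely lies on $\partial\Delta^{p-1}$. The $Z_p$-equivariance of $\mu$ is inherited from that of $\lambda$, and $\mu$ extends continuously over simplices of $(Z_p)^{*n}$ via the natural barycentric/join coordinates.

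Finally one invokes Dold's theorem, the $Z_p$-analogue of the Borsuk-Ulam theorem: no $Z_p$-equivariant continuous map exists from an $(n-2)$-connected free $Z_p$-space into a free $Z_p$-CW complex of dimension at most $n-2$. The constructed $\mu$ contradicts this, forcing $\alpha+(m-\alpha)(p-1)\geq n$. The main obstacle is the construction of $\mu$: verifying that the prefix recipe extends to a well-defined continuous map on the entire geometric realization of $(Z_p)^{*n}$, that hypotheses (i) and (ii) deliver exactly the single-vertex constraint in the $j\leq\alpha$ factors and the proper-boundary constraint in the $j>\alpha$ factors for every chain occurring inside a given signed set, and that nothing leaks into the interiors of the $\Delta^{p-1}$-factors. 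Organizing this bookkeeping so that the join decomposition of $W$ matches the prefix chain of $X$ consistently is what makes the argument technically demanding.
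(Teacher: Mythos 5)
Your global strategy (identify $(Z_p\cup\{0\})^n\setminus\{0\}$ with the face poset of $(Z_p)^{*n}$, which is $(n-2)$-connected and free; map equivariantly into $W=Z_p^{*\alpha}*(\partial\Delta^{p-1})^{*(m-\alpha)}$ of dimension $\alpha+(m-\alpha)(p-1)-1$; conclude by Dold's theorem) is exactly the standard route of Ziegler and Meunier, which is where the paper takes this lemma from (it is stated there without proof, with a citation). The connectivity, dimension, and freeness counts you give are all correct. The problem is the one step you yourself flag as the ``main obstacle'': your construction of $\mu$ via position-ordered prefix chains does not work, and it is not a matter of bookkeeping. If $X\subseteq Y$, the prefixes of $X$ (restrictions to the first $l$ nonzero positions) need not be comparable to the prefixes of $Y$; e.g. $X$ supported on $\{3\}$ and $Y$ on $\{2,3\}$ give the incomparable sets $\{3\}$ and $\{2\}$ as prefixes. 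Hypotheses (i) and (ii) only constrain $\lambda$ along chains, so nothing prevents two such incomparable prefixes from receiving labels $(\omega,j)$ and $(\omega^2,j)$ with $j\leq\alpha$; then any extension of your vertex assignment over the subdivision simplex spanned by (the barycenters of) $X$ and $Y$ puts positive mass on two distinct vertices of the discrete factor $W_j$, which is not a point of the join $W$. The same phenomenon can push mass onto all $p$ vertices of a $\partial\Delta^{p-1}$ factor. In addition, the ``equal mass $1/k$'' recipe only defines $\mu$ at barycenters, and because the prefix chain is ordered by position rather than by the size of the join coordinates, the assignment does not vary continuously as the coordinate at the smallest occupied position tends to $0$ (the entire prefix chain, hence the image, jumps).

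The repair is the classical one, and it is simpler than what you attempt: pass to the first barycentric subdivision of $(Z_p)^{*n}$, whose vertices are the nonzero signed sets and whose simplices are chains under $\subseteq$, and define the \emph{simplicial} map sending the vertex $X$ to the single vertex $(\lambda_1(X),\lambda_2(X))$ of $W$, where $W$ is the complex on $Z_p\times[m]$ whose faces contain at most one vertex with second coordinate $j$ for each $j\leq\alpha$ and at most $p-1$ distinct first coordinates with second coordinate $j$ for each $j\geq\alpha+1$. Condition (i) is precisely what guarantees the first constraint on the image of a chain, and condition (ii) (applied to a subchain realizing $p$ pairwise distinct first coordinates) precisely guarantees the second; equivariance of $\lambda$ gives equivariance of the simplicial map. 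With that construction in place, your dimension count and the appeal to Dold's theorem finish the proof exactly as you describe.
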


Let $\mathcal{H}=(V(\mathcal{H}),E(\mathcal{H}))$ be a hypergraph and $r$ be
an integer, where $r\geq 2$. The \textit{general Kneser hypergraph} $KG^{r}(%
\mathcal{H})$ is a hypergraph with the vertex set $E(\mathcal{H}$) and the
edge set 
\begin{equation*}
E(KG^{r}(\mathcal{H}))=\{\{e_{1},...,e_{r}\}:e_{i}\in E(\mathcal{H})\text{
and }e_{i}\cap e_{j}=\text{\O\ for each }i\neq j\in \lbrack r]\}\text{.}
\end{equation*}

In 2011, Meunier \cite{Meunier11} investigated the chromatic number of
almost $2$-stable $r$-uniform\ Kneser hypergraphs. Dol'nikov-K\v{r}\'{\i}%
\v{z} Theorem \ref{t00006} was improved from the work of Alishahi and
Hajiabolhassan \cite{Alishahi and Hajiabolhassan15} and their interesting
notion of \textit{alternation number} for general Kneser hypergraphs.
Alishahi and Hajiabolhassan \cite{Alishahi and Hajiabolhassan15} generalized
the proof techniques of Meunier \cite{Meunier11}. Frick \cite{Frick16}
investigated $\chi \left( KG^{r}(n,k)_{s\text{-}stab}\right) $ for the case $%
r>s$. Recently, he shows that Conjecture \ref{c0001} is true for $r>6s-6$ a
prime power. Frick \cite{Frick17} makes significant progress via the
topological Tverberg theorem. The proof techniques of Frick do not apply to
the case $r\leq s$.

Let $p$ be a prime number. With the help of $Z_{p}$-Tucker's Lemma \ref%
{Zp-Tucker}, we obtain the following topological lower bound of the
chromatic number of the almost $s$-stable $p$-uniform\ Kneser hypergraph $%
KG^{p}(n,k)_{\widetilde{s\text{-}stab}}$. Our method is different from those
of \cite{Alishahi and Hajiabolhassan15}, \cite{Frick16,Frick17}, and \cite%
{Meunier11}.

\begin{theorem}
\label{t00001}Let $p$ be a prime number and $n,k,s$ be positive integers
such that $s\geq 2$ and $n\geq (p+s-2)(k-1)+\max \left(
\{p,s\}\right) $. Then $\chi \left( KG^{p}(n,k)_{\widetilde{s\text{-}stab}%
}\right) \geq \left\lceil \frac{n-(p+s-2)(k-1)}{p-1}\right\rceil $.
\end{theorem}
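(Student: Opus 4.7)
To establish Theorem~\ref{t00001}, I would apply the $Z_p$-Tucker lemma (Lemma~\ref{Zp-Tucker}) in the style of Ziegler and Meunier. Assume a proper coloring $c:\binom{[n]}{k}_{\widetilde{s\text{-}stab}}\to[t]$ is given, and set $\alpha:=(p+s-2)(k-1)$ and $m:=\alpha+t$. The goal is to construct a $Z_p$-equivariant labeling $\lambda=(\lambda_1,\lambda_2):(Z_p\cup\{0\})^n\setminus\{(0,\ldots,0)\}\to Z_p\times[m]$ that satisfies hypotheses (i) and (ii) of Lemma~\ref{Zp-Tucker}. The conclusion $\alpha+(m-\alpha)(p-1)\geq n$ will then rearrange to $t\geq\lceil(n-(p+s-2)(k-1))/(p-1)\rceil$, giving the desired chromatic lower bound.

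\textbf{The labeling.} I would define a statistic $\Phi(X)$ on $X=(X_1,\ldots,X_p)$ that blends label alternation with $s$-spacing, for instance as the maximum $L$ for which there exist indices $i_1<i_2<\cdots<i_L$ in $[n]$ with $x_{i_\ell}\neq 0$ such that, for every $\ell$, either $x_{i_\ell}\neq x_{i_{\ell+1}}$ or $i_{\ell+1}-i_\ell\geq s$. In the \emph{small regime} $\Phi(X)\leq\alpha$, set $\lambda_2(X)=\Phi(X)$ and let $\lambda_1(X)$ be the $Z_p$-sign of $x_{i_L}$ in a canonically chosen witness sequence; monotonicity of $\Phi$ under the partial order $\subseteq$ on signed sets, together with the canonical choice, delivers property (i). In the \emph{large regime} $\Phi(X)>\alpha$, a witness sequence of length $\alpha+1$ must, by a pigeonhole argument on the $(p-1)(k-1)$ possible label-change slots and the $(s-1)(k-1)$ possible small-gap slots, contain $k$ positions carrying a common label $\omega^j$ whose successive separations are all $\geq s$; this yields an almost-$s$-stable $k$-subset $S\subseteq X_j$. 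Taking the lex-minimal such pair $(j,S)$ and setting $\lambda_2(X)=\alpha+c(S)$, $\lambda_1(X)=\omega^j$ makes $\lambda$ manifestly $Z_p$-equivariant under the diagonal action.

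\textbf{Verifications and main obstacle.} Property (ii) follows by the standard argument: for a chain $X^{(1)}\subseteq\cdots\subseteq X^{(p)}$ all carrying the same large-regime label $\alpha+c(S)$ but pairwise distinct $\lambda_1$ values $\omega^{j_1},\ldots,\omega^{j_p}$, the canonical extractions would produce $p$ pairwise disjoint almost-$s$-stable $k$-subsets of the common color $c(S)$, contradicting that $c$ is a proper coloring of $KG^p(n,k)_{\widetilde{s\text{-}stab}}$. The real obstacle is proving that the threshold $\alpha=(p-1)(k-1)+(s-1)(k-1)$ is sharp for $\Phi$ in the sense required: I must verify that every witness sequence of length exceeding $(p+s-2)(k-1)$ forces $k$ same-label entries with all consecutive gaps $\geq s$ inside a single $X_j$. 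Getting the combinatorial bookkeeping right (so that the ``label alternation'' slack and the ``$s$-spacing'' slack cannot be traded off to produce longer witnesses without creating such a $k$-subset) is the crux; the hypothesis $n\geq(p+s-2)(k-1)+\max(p,s)$ is exactly what is needed both to ensure $\binom{[n]}{k}_{\widetilde{s\text{-}stab}}\neq\varnothing$ and to have enough room on $[n]$ for these extractions.
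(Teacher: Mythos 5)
Your overall strategy (a $Z_p$-equivariant labeling fed into Lemma~\ref{Zp-Tucker}, with the final count $\alpha+(m-\alpha)(p-1)\geq n$ arranged to give $t\geq\lceil (n-(p+s-2)(k-1))/(p-1)\rceil$) is the right one, but the combinatorial core of your labeling is false for $s\geq 3$, and you have correctly identified it as the crux. Your claim is that any witness sequence of length exceeding $\alpha=(p+s-2)(k-1)$ must contain $k$ same-label positions with successive separations $\geq s$. Counterexample: take $p=2$, $s=3$, $k=2$ (so $\alpha=3$, and $n\geq 6$ is allowed by the hypothesis) and $X=(+,-,+,-,0,\ldots,0)$. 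The sequence $1,2,3,4$ is a legal witness because every consecutive pair changes label, so $\Phi(X)=4>\alpha$; yet the same-label pairs $\{1,3\}$ and $\{2,4\}$ are at distance $2<s$, so neither part contains an almost $3$-stable $2$-subset and your large-regime label $\lambda_2(X)=\alpha+c(S)$ is undefined. The defect is structural: the OR-condition lets label changes alone sustain long witnesses while each $X_j$ stays inside a window of width $s-1$; already for $k=2$ the true extraction threshold for your statistic is about $p+s-1$, not $p+s-2$, and raising $\alpha$ to repair the claim destroys exactly the bound you are trying to prove. (Two smaller points: choosing the ``lex-minimal pair $(j,S)$'' with $j$ as a key is not $Z_p$-equivariant --- one must choose $S$ by a total order on subsets of $[n]$, which then determines $j$; and property (i) in the small regime needs an actual extension argument showing that equal $\Phi$-values along $X^{(1)}\subseteq X^{(2)}$ force equal signs, not just a canonical choice of witness.)

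For comparison, the paper does not use a single alternation-type statistic at all. It works with $\mathcal{I}(X)$, the largest $q\leq k$ such that some part $X_j$ contains an almost $s$-stable $q$-subset, and splits the regime $\mathcal{I}(X)\leq k-1$ in two. When \emph{all} $p$ parts attain $\mathcal{I}(X)$, it sets $\lambda_2(X)=(\mathcal{I}(X)-1)(s-1)+(\max(X)\bmod (s-1))+1\in\{1,\ldots,(s-1)(k-1)\}$, and property (i) is proved by showing that equal values along $X^{(1)}\subseteq X^{(2)}$ force $\mathcal{I}(X^{(1)})=\mathcal{I}(X^{(2)})$ and $\max(X^{(1)})\equiv\max(X^{(2)})\pmod{s-1}$, whence a sign discrepancy would let one append $\max(X^{(2)}_j)$ to an $\mathcal{I}$-subset and increase $\mathcal{I}$. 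When \emph{fewer} than $p$ parts attain $\mathcal{I}(X)$, it sets $\lambda_2(X)=(s-1)(k-1)+\mathcal{I}(X)$, which is placed in the condition (ii) zone of Lemma~\ref{Zp-Tucker}: there one only needs that the $\lambda_1$-values along a chain are not pairwise distinct, and this follows because all the witnessing parts embed into parts of the top set $X^{(p)}$, which has $|\mathcal{C}(X^{(p)})|<p$. This two-tier design, with $\alpha=(s-1)(k-1)$ and $m=\alpha+(k-1)+t$, is what produces the total $(s-1)(k-1)+((k-1)+t)(p-1)=(p+s-2)(k-1)+t(p-1)$; your single-threshold pigeonhole cannot reach that constant, so you would need to restructure your labeling along these lines rather than sharpen the bookkeeping of $\Phi$.
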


\begin{proof}
Assume that $KG^{p}(n,k)_{\widetilde{s\text{-}stab}}$ is properly colored
with $t$ colors. For $S\in \binom{\left[ n\right] }{k}_{\widetilde{s\text{-}%
stab}}$, we denote by $c(S)$ its color. We know that if $A$ is a nonempty
subset of $[n]$, then $A$ must contain an almost $s$-stable $1$-subset of $%
[n]$. In our approach, we need to introduce three functions. It should be
emphasized that we shall use two functions $\mathcal{I}(-)$ and $\mathcal{C}%
(-)$ several times during the proof. Let $X=(x_{1},x_{2},\ldots ,x_{n})\in
(Z_{p}\cup \{0\})^{n}\backslash \{0\}^{n}$. We can write alternatively $%
X=(X_{1},X_{2},\ldots ,X_{p})$. Define

$\mathcal{I}(X):=\max (\{q\in \lbrack k]:T$ is an almost $s$-stable $q$%
-subset of $[n]$ and $T\subseteq X_{j}$ for some\ $j\in \lbrack p]\})$.

Clearly, $\mathcal{I}(X)$ is well-defined. Then we define

$\mathcal{C}(X):=\{j\in \lbrack p]:X_{j}$ contains an almost $s$-stable $%
\mathcal{I}(X)$-subset of $[n]\}$, and

$\mathcal{M}(X):=\max (\{\max \left( T\right) \in \lbrack n]:T$ is an almost 
$s$-stable $\mathcal{I}(X)$-subset of $[n]$ and $T\subseteq X_{j}$ for some $%
j\in \lbrack p]\})$.

By definition of almost $s$-stable set, we know that if $X_{j}$ contains an
almost $s$-stable $\mathcal{I}(X)$-subset $A$ for some $j\in \lbrack p]$,
then $X_{j}$ must contain an almost $s$-stable $\mathcal{I}(X)$-subset $B$
with $\max \left( B\right) =\max \left( X_{j}\right) $. Hence, we can derive
that

$\mathcal{M}(X):=\max (\{\max \left( X_{j}\right) \in \lbrack n]:$ $j\in 
\mathcal{C}(X)\})$.

One can easily check that if $\left\vert \mathcal{C}(X)\right\vert =p$, then 
$\mathcal{M}(X)=\max \left( X\right) $.

Set $\alpha =(s-1)(k-1)$. Now, define the function%
\begin{equation*}
\begin{array}{cccc}
\lambda : & (Z_{p}\cup \{0\})^{n}\backslash \{(0,\ldots ,0)\} & 
\longrightarrow & Z_{p}\times \lbrack m]%
\end{array}%
\end{equation*}%
with $m=\alpha +(k-1)+t$. We choose a total ordering $\preceq $ on the
subsets of $[n]$. This ordering is only used to get a clean definition of $%
\lambda $.

\textbf{Case I.} $\mathcal{I}(X)\leq k-1$.

If $\left\vert \mathcal{C}(X)\right\vert =p$, then we know that $\mathcal{M}%
(X)=\max \left( X\right) $. Let $j$ be the index of $X_{j}$ with $\max
\left( X_{j}\right) =\max (X)$.

Define $\lambda \left( X\right) $ as%
\begin{equation*}
\left( \omega ^{j},\left( \mathcal{I}(X)-1\right) (s-1)+\left( \max
(X)-(s-1)\left\lfloor \frac{\max (X)}{s-1}\right\rfloor \right) +1\right) 
\text{.}
\end{equation*}

Note that $\max (X)-(s-1)\left\lfloor \frac{\max (X)}{s-1}\right\rfloor $ is
the remainder of $\max (X)$ divided by $s-1$. One can easily check that $%
\lambda _{2}\left( X\right) \in \{1,2,\ldots ,\alpha \}$.

If $\left\vert \mathcal{C}(X)\right\vert <p$, let $j$ be the index of $X_{j}$
with $\max \left( X_{j}\right) =\mathcal{M}(X)$ and then $\lambda \left(
X\right) $ is defined to be $\left( \omega ^{j},\alpha +\mathcal{I}%
(X)\right) $. One can easily check that $\lambda _{2}\left( X\right) \in
\{\alpha +1,\ldots ,\alpha +(k-1)\}$.\newline
\ \ \ \ \ \ \ \ \ \ \ \ \textbf{Case II.} $\mathcal{I}(X)=k$. By definition
of $\mathcal{I}(X)$, at least one of the $X_{j}$'s with $j\in \lbrack p]$
contains an almost $s$-stable $k$-subsets of $[n]$. Choose $j\in \lbrack p]$
such that there is $S\subseteq X_{j}$ with $S\in \binom{\left[ n\right] }{k}%
_{\widetilde{s\text{-}stab}}$. In case several $S$ are possible, choose the
maximal one according to the total ordering $\preceq $. Let $j$ be such that 
$S\subseteq X_{j}$ and define%
\begin{equation*}
\lambda (X):=\left( \omega ^{j},c(S)+s(k-1)\right) \text{.}
\end{equation*}

One can easily check that $\lambda _{2}\left( X\right) \in \{\alpha
+k,\ldots ,m\}$. Clearly, $\lambda $ is an $Z_{p}$-equivariant map from $%
(Z_{p}\cup \{0\})^{n}\backslash \{(0,\ldots ,0)\}$ to $Z_{p}\times \lbrack
m] $.

Let $X^{(1)}\subseteq X^{(2)}\in (Z_{p}\cup \{0\})^{n}\backslash \{(0,\ldots
,0)\}$. Obviously, $\max (X^{(1)})\leq \max (X^{(2)})$ and $\mathcal{I}%
(X^{(1)})\leq \mathcal{I}(X^{(2)})$. If $\lambda _{2}\left( X^{(1)}\right)
=\lambda _{2}\left( X^{(2)}\right) \leq \alpha $, then $\left\vert \mathcal{C%
}(X^{(1)})\right\vert =\left\vert \mathcal{C}(X^{(2)})\right\vert =p$, $%
\mathcal{I}(X^{(1)})\leq \mathcal{I}(X^{(2)})\leq k-1$, and $\left( \mathcal{%
I}(X^{(1)})-1\right) (s-1)+\max (X^{(1)})-(s-1)\left\lfloor \frac{\max
(X^{(1)})}{s-1}\right\rfloor +1=\left( \mathcal{I}(X^{(2)})-1\right)
(s-1)+\max (X^{(2)})-(s-1)\left\lfloor \frac{\max (X^{(2)})}{s-1}%
\right\rfloor +1$. It implies that $0\leq \left( \mathcal{I}(X^{(2)})-%
\mathcal{I}(X^{(1)})\right) (s-1)=\left( \max (X^{(1)})-(s-1)\left\lfloor 
\frac{\max (X^{(1)})}{s-1}\right\rfloor \right) -\left( \max
(X^{(2)})-(s-1)\left\lfloor \frac{\max (X^{(2)})}{s-1}\right\rfloor \right)
\leq s-2$. Hence, we have $\mathcal{I}(X^{(1)})=\mathcal{I}(X^{(2)})$ and $%
\max (X^{(1)})-(s-1)\left\lfloor \frac{\max (X^{(1)})}{s-1}\right\rfloor
=\max (X^{(2)})-(s-1)\left\lfloor \frac{\max (X^{(2)})}{s-1}\right\rfloor $.
That is, 
\begin{equation}
\mathcal{I}(X^{(1)})=\mathcal{I}(X^{(2)})\text{ and }\max (X^{(1)})\equiv
\max (X^{(2)})\text{ }\left( mod\text{ }s-1\right) \text{.}  \label{r0001}
\end{equation}%
Now we show that $\lambda _{1}\left( X^{(1)}\right) =\lambda _{1}\left(
X^{(2)}\right) $. Assume that $\lambda _{1}\left( X^{(1)}\right) =\omega
^{i}\neq \omega ^{j}=\lambda _{1}\left( X^{(2)}\right) $ for $i\neq j\in
\lbrack p]$. It means that $\max \left( X_{i}^{(1)}\right) =\max
(X^{(1)})<\max (X^{(2)})=\max \left( X_{j}^{(2)}\right) $. Since $\left\vert 
\mathcal{C}(X^{(1)})\right\vert =p$, there exist two disjoint almost $s$%
-stable $\mathcal{I}(X^{(1)})$-subsets $A$ and $B$ such that $A\subseteq
X_{j}^{(1)}$, $B\subseteq X_{i}^{(1)}$, and $\max (A)<\max (B)=\max
(X^{(1)})<\max (X^{(2)})=\max \left( X_{j}^{(2)}\right) $. From (\ref{r0001}%
), we obtain that $\max \left( X_{j}^{(2)}\right) -\max (A)=(\max
(X^{(2)})-\max (X^{(1)}))+(\max (B)-\max (A))\geq (s-1)+1=s$. Since $%
X^{(1)}\subseteq X^{(2)}$, $A\subseteq X_{j}^{(1)}\subseteq X_{j}^{(2)}$. It
means that there is an almost $s$-stable $\left( \mathcal{I}%
(X^{(1)})+1\right) $-subset $F=A\cup \{\max (X_{j}^{(2)})\}$ such that $%
F\subseteq X_{j}^{(2)}$. It follows that $\mathcal{I}(X^{(2)})\geq \mathcal{I%
}(X^{(1)})+1>\mathcal{I}(X^{(1)})$. It contradicts to (\ref{r0001}). So we
are done.

Let $X^{(1)}\subseteq X^{(2)}\subseteq \cdots \subseteq X^{(p)}\in
(Z_{p}\cup \{0\})^{n}\backslash \{(0,\ldots ,0)\}$. If $\alpha +1\leq
\lambda _{2}\left( X^{(1)}\right) =\lambda _{2}\left( X^{(2)}\right) =\cdots
=\lambda _{2}\left( X^{(p)}\right) \leq \alpha +(k-1)$, then $\mathcal{I}%
(X^{(1)})=\mathcal{I}(X^{(2)})=\cdots =\mathcal{I}(X^{(p)})\leq k-1$.
Moreover, we have that for each $i\in \lbrack p]$, $\left\vert \mathcal{C}%
(X^{(i)})\right\vert <p$ and, there is almost $s$-stable $\mathcal{I}%
(X^{(p)})$-subset $T_{i}$ and $j_{i}\in \lbrack p]$ such that we have $%
T_{i}\subseteq X_{j_{i}}^{(i)}$. Since $X^{(1)}\subseteq X^{(2)}\subseteq
\cdots \subseteq X^{(p)}$, $T_{i}\subseteq X_{j_{i}}^{(i)}\subseteq
X_{j_{i}}^{(p)}$ for each $i\in \lbrack p]$. If all $\lambda _{1}\left(
X^{(i)}\right) $ would be distinct, then it would mean that all $j_{i}$
would be distinct, which implies that $\left\vert \mathcal{C}%
(X^{(p)})\right\vert =p$. It is a contradiction.

If $\lambda _{2}\left( X^{(1)}\right) =\lambda _{2}\left( X^{(2)}\right)
=\cdots =\lambda _{2}\left( X^{(p)}\right) \geq \alpha +k$, then for each $%
i\in \lbrack p]$, there is $S_{i}\in \binom{\left[ n\right] }{k}_{\widetilde{%
s\text{-}stab}}$ and $j_{i}\in \lbrack p]$ such that we have $S_{i}\subseteq
X_{j_{i}}^{(i)}$ and $\lambda _{2}\left( X^{(i)}\right) =c(S_{i})+\alpha
+(k-1)$. If all $\lambda _{1}\left( X^{(i)}\right) $ would be distinct, then
it would mean that all $j_{i}$ would be distinct, which implies that the $%
S_{i}$ would be disjoint but colored with the same color, which is
impossible since $c$ is a proper coloring.

Therefore, we can thus apply the $Z_{p}$-Tucker Lemma \ref{Zp-Tucker} and
conclude that $n\leq \alpha +(m-\alpha )(p-1)=(s-1)(k-1)+[(k-1)+t](p-1)$,
that is%
\begin{equation*}
t\geq \left\lceil \frac{n-(p+s-2)(k-1)}{p-1}\right\rceil \text{.}
\end{equation*}
\end{proof}

Combining Lemma \ref{l00001} and Theorem \ref{t00001} for the case $s=2$, we
obtain the following result proposed by Meunier \cite{Meunier11} via a
different approach. As an approach to Conjecture \ref{c000}, Meunier \cite%
{Meunier11} settled Conjecture \ref{c0004} for $s=2$.

\begin{corollary}
Let $p$ be a prime number and $n,k$ be positive integers such that $n\geq pk$%
. Then $\chi \left( KG^{p}(n,k)_{\widetilde{2\text{-}stab}}\right)
=\left\lceil \frac{n-p(k-1)}{p-1}\right\rceil $.
\end{corollary}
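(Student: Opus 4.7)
The plan is to deduce the corollary by sandwiching $\chi\left(KG^{p}(n,k)_{\widetilde{2\text{-}stab}}\right)$ between the matching upper bound from Lemma \ref{l00001} and the matching lower bound from Theorem \ref{t00001}, both specialized to the parameters $r=p$ and $s=2$.

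First I would apply Lemma \ref{l00001} with $r=p$ and $s=2$. Since $p$ is prime we have $p\geq 2$, so $\max(\{p,2\})=p$, and the hypothesis $n\geq \max(\{p,2\})k=pk$ is exactly what the corollary gives us. Lemma \ref{l00001} therefore yields the upper bound
\begin{equation*}
\chi\left(KG^{p}(n,k)_{\widetilde{2\text{-}stab}}\right)\leq \left\lceil\frac{n-p(k-1)}{p-1}\right\rceil.
\end{equation*}

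Next I would invoke Theorem \ref{t00001} with $s=2$. The hypothesis $n\geq (p+s-2)(k-1)+\max(\{p,s\})$ becomes $n\geq p(k-1)+p=pk$, which again agrees with the corollary's assumption. The conclusion of Theorem \ref{t00001} at $s=2$ reads
\begin{equation*}
\chi\left(KG^{p}(n,k)_{\widetilde{2\text{-}stab}}\right)\geq \left\lceil\frac{n-(p+2-2)(k-1)}{p-1}\right\rceil=\left\lceil\frac{n-p(k-1)}{p-1}\right\rceil,
\end{equation*}
so the upper and lower bounds coincide, proving the equality.

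There is essentially no obstacle beyond bookkeeping: the entire content of the corollary is the $s=2$ specialization of the two previously established bounds. The one sanity check worth making explicit is that $\max(\{p,2\})=p$ (so that the upper bound from Lemma \ref{l00001} collapses to $\lceil (n-p(k-1))/(p-1)\rceil$ rather than $\lceil (n-2(k-1))/(p-1)\rceil$) and that the numerical hypothesis of Theorem \ref{t00001} is no stronger than $n\geq pk$ when $s=2$. Both are immediate from $p\geq 2$, so no further work is required.
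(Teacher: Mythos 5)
Your proposal is correct and is exactly the paper's own argument: the paper obtains this corollary by combining Lemma \ref{l00001} (upper bound, with $\max(\{p,2\})=p$) and Theorem \ref{t00001} (lower bound) at $s=2$, precisely the sandwich you describe, including the hypothesis check $n\geq (p+2-2)(k-1)+p=pk$.
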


By Theorem \ref{t00001}, we also obtain a topological lower bound of the $s$%
-stable $p$-uniform\ Kneser hypergraph $KG^{p}(n,k)_{s\text{-}stab}$ as
follows.

\begin{theorem}
\label{t00002}Let $p$ be a prime number and $n,k,s$ be positive integers
such that $p\geq 2$, $s\geq 2$ and $n\geq (p+s-2)(k-1)+(s-1)+\max \left(
\{p,s\}\right) $. Then $\chi \left( KG^{p}(n,k)_{s\text{-}stab}\right) \geq
\left\lceil \frac{n-(p+s-2)(k-1)-(s-1)}{p-1}\right\rceil $.
\end{theorem}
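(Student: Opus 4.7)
The plan is to reduce Theorem \ref{t00002} to Theorem \ref{t00001} by exhibiting the almost $s$-stable $p$-uniform Kneser hypergraph on a slightly shorter ground set as an induced subhypergraph of the $s$-stable one. The key observation is that on the cycle $[n]$, an $s$-stable subset must, in addition to the ``path'' distance conditions, have its first and last elements at cyclic distance at least $s$, which imposes a constraint of the form $i_1 + (n - i_k) \geq s$. If one restricts to subsets contained in $[n-(s-1)]$, this last condition is automatic because $n - i_k \geq s - 1$ and $i_1 \geq 1$.

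First, I would set $n' := n - (s-1)$ and define a map sending $S = \{i_1 < \cdots < i_k\} \in \binom{[n']}{k}_{\widetilde{s\text{-}stab}}$ to the same set viewed in $[n]$. The almost $s$-stability condition gives $i_{j+1} - i_j \geq s$, and the restriction $i_k \leq n' = n-(s-1)$ yields $n - i_k + i_1 \geq (s-1) + 1 = s$, so $S$ is genuinely $s$-stable on the cycle $[n]$. Consequently, this inclusion embeds $\binom{[n']}{k}_{\widetilde{s\text{-}stab}}$ into $\binom{[n]}{k}_{s\text{-}stab}$, and because disjointness is preserved, $p$-tuples of pairwise disjoint almost $s$-stable $k$-subsets of $[n']$ correspond bijectively to edges of $KG^{p}(n,k)_{s\text{-}stab}$ supported on this image. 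In other words, $KG^{p}(n',k)_{\widetilde{s\text{-}stab}}$ is an induced subhypergraph of $KG^{p}(n,k)_{s\text{-}stab}$.

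Second, I would restrict any proper coloring of $KG^{p}(n,k)_{s\text{-}stab}$ to this subhypergraph to conclude $\chi(KG^{p}(n',k)_{\widetilde{s\text{-}stab}}) \leq \chi(KG^{p}(n,k)_{s\text{-}stab})$. The hypothesis $n \geq (p+s-2)(k-1) + (s-1) + \max(\{p,s\})$ is precisely chosen so that $n' = n-(s-1)$ satisfies the size condition required by Theorem \ref{t00001}. Applying that theorem to $n'$ then yields
\begin{equation*}
\chi\bigl(KG^{p}(n,k)_{s\text{-}stab}\bigr) \geq \left\lceil \frac{n' - (p+s-2)(k-1)}{p-1} \right\rceil = \left\lceil \frac{n - (p+s-2)(k-1) - (s-1)}{p-1} \right\rceil,
\end{equation*}
which is the desired bound.

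There is essentially no obstacle here: the entire content is the observation that trimming $s-1$ points from the right end of $[n]$ converts ``almost $s$-stable'' into ``$s$-stable'' on the cycle, and all topological machinery is already packaged inside Theorem \ref{t00001}. The only thing to double-check is that the numeric hypothesis is tight enough to invoke Theorem \ref{t00001} after the substitution $n \mapsto n-(s-1)$, which is immediate from a direct comparison.
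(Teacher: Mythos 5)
Your proposal is correct and follows essentially the same route as the paper: the paper also sets $\overline{n}=n-s+1$, observes that almost $s$-stable $k$-subsets of $[\overline{n}]$ are $s$-stable $k$-subsets of $[n]$ so that a proper coloring of $KG^{p}(n,k)_{s\text{-}stab}$ restricts to one of $KG^{p}(\overline{n},k)_{\widetilde{s\text{-}stab}}$, and then applies Theorem \ref{t00001} to $\overline{n}$. Your verification of the cyclic distance condition and of the numeric hypothesis matches the paper's (brief) argument, so there is nothing to correct.
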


\begin{proof}
Let $\overline{n}=n-s+1\geq (p+s-2)(k-1)+\max \left( \{p,s\}\right) \geq sk$%
. Assume that $KG^{p}(n,k)_{s\text{-}stab}$ is properly colored with $t$
colors. We know that if $S$ is an almost $s$-stable $k$-subset of $[n-s+1]$,
then $S$ is an $s$-stable $k$-subset of $[n]$. That is, $\binom{\left[ 
\overline{n}\right] }{k}_{\widetilde{s\text{-}stab}}\subseteq \binom{\left[ n%
\right] }{k}_{s\text{-}stab}$. It means that $KG^{p}(\overline{n},k)_{%
\widetilde{s\text{-}stab}}$ is $t$-colorable, and then $\chi \left( KG^{p}(%
\overline{n},k)_{\widetilde{s\text{-}stab}}\right) \leq t$. From Theorem \ref%
{t00001}, we have 
\begin{equation*}
t\geq \left\lceil \frac{\overline{n}-(p+s-2)(k-1)}{p-1}\right\rceil
=\left\lceil \frac{n-(p+s-2)(k-1)-(s-1)}{p-1}\right\rceil \text{.}
\end{equation*}
\end{proof}

By Theorem \ref{t00002} for the case $s=2$, we obtain the following result
proposed by Alishahi and Hajiabolhassan \cite{Alishahi and Hajiabolhassan15}
via a different approach.

\begin{corollary}
Let $p$ be a prime number and $n,k$ be positive integers such that $n\geq
pk+1$. Then $\chi \left( KG^{p}(n,k)_{2\text{-}stab}\right) \geq \left\lceil 
\frac{n-p(k-1)-1}{p-1}\right\rceil $.
\end{corollary}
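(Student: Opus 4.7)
The plan is to derive this corollary as an immediate specialization of Theorem \ref{t00002} to the case $s=2$, since Theorem \ref{t00002} is already proved in the excerpt and provides a lower bound that is uniform in $s$. No new topological ingredient should be required: all of the $Z_p$-Tucker machinery was already invested in Theorem \ref{t00001}, and Theorem \ref{t00002} transferred that bound from the almost-$s$-stable setting to the $s$-stable setting by the padding trick $\overline{n}=n-s+1$.

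Concretely, I would first verify that the numerical hypothesis matches. Setting $s=2$ in the assumption of Theorem \ref{t00002} yields
\begin{equation*}
(p+s-2)(k-1)+(s-1)+\max(\{p,s\}) = p(k-1)+1+\max(\{p,2\}) = p(k-1)+1+p = pk+1,
\end{equation*}
where I used that $p$ is a prime with $p\geq 2$ so $\max(\{p,2\})=p$. Thus the hypothesis $n\geq pk+1$ of the corollary is exactly the hypothesis of Theorem \ref{t00002} with $s=2$.

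Next I would compute the right-hand side: with $s=2$ the conclusion of Theorem \ref{t00002} reads
\begin{equation*}
\chi\left(KG^p(n,k)_{2\text{-}stab}\right)\geq \left\lceil \frac{n-(p+s-2)(k-1)-(s-1)}{p-1}\right\rceil = \left\lceil \frac{n-p(k-1)-1}{p-1}\right\rceil,
\end{equation*}
which is exactly the inequality claimed in the corollary. Since the hypothesis and conclusion both specialize cleanly, there is no residual combinatorial obstacle; the only thing to double-check is that $\max(\{p,2\})=p$ for every prime $p$, which is trivial. Hence the corollary follows with no further work beyond citing Theorem \ref{t00002}.
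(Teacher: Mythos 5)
Your specialization is exactly the paper's argument: the corollary is stated as an immediate consequence of Theorem \ref{t00002} with $s=2$, and your check that the hypothesis $(p+s-2)(k-1)+(s-1)+\max(\{p,s\})$ becomes $pk+1$ and the bound becomes $\left\lceil \frac{n-p(k-1)-1}{p-1}\right\rceil$ is correct. Nothing further is needed.
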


Alishahi and Hajiabolhassan \cite{Alishahi and Hajiabolhassan15} confirmed
the chromatic number of $2$-stable $r$-uniform\ Kneser hypergraph $%
KG^{r}(n,k)_{\widetilde{2\text{-}stab}}$ unless $r$ is odd and $n\equiv k$ $%
(mod$ $r-1)$. Recently, Frick \cite{Frick17} completely confirm Conjecture %
\ref{c0001} for $s=2$.

Let $p$ be a prime number. Our results specialize to a substantial
improvement of the Dol'nikov-K\v{r}\'{\i}\v{z} lower bounds for $\chi \left(
KG^{p}(n,k)_{s\text{-}stab}\right) $ and $\chi \left( KG^{p}(n,k)_{%
\widetilde{s\text{-}stab}}\right) $ as well.

\subsection{Coloring results about $KG^{2}(n,k)_{\widetilde{s\text{-}stab}}$}

This subsection deals with some coloring results about the almost $s$-stable
Kneser graphs $KG^{2}(n,k)_{\widetilde{s\text{-}stab}}$. As a special case $%
r=2$ of Conjecture \ref{c0001}, Meunier \cite{Meunier11} proposed the
following conjecture:

Let $n$, $k$, $s$ be positive integers such that $k\geq 2$, $s\geq 2$ and $%
n\geq sk$. Then 
\begin{equation*}
\chi \left( KG^{2}(n,k)_{s\text{-}stab}\right) =n-s(k-1)\text{.}
\end{equation*}%
Meunier \cite{Meunier11} showed that $\chi \left( KG^{2}(sk+1,k)_{s\text{-}%
stab}\right) =s+1$. Jonsson \cite{Jonsson12} confirmed that $\chi \left(
KG^{r}(n,k)_{s\text{-}stab}\right) =n-s(k-1)$ for $s\geq 4$, provided $n$ is
sufficiently large in terms of $s$ and $k$. For the case $r=2$ and $s$ even,
Chen \cite{Pengan15} proved that Conjecture \ref{c0001} is true. It follows
that $\chi \left( KG^{2}\left( n,k\right) _{\widetilde{s\text{-}stab}%
}\right) =n-s(k-1)$ for all even $s$ and for $s\geq 4$ and $n$ sufficiently
large. Besides the case $s=2$, the case $s=3$ is completely open.

The proof of Theorem \ref{t00001} makes use of the $Z_{p}$-Tucker Lemma \ref%
{Zp-Tucker}. We provide a different approach to investigate the topological
lower bound of the chromatic number of the almost $s$-stable $r$-uniform\
Kneser hypergraphs. We consider the case $r=2$. Combining Lemma \ref{l00001}
and Theorem \ref{t00001}, we completely confirm the chromatic number of the
usual almost $s$-stable Kneser graphs is equal to $n-s(k-1)$ for all $s\geq
2 $.

\begin{theorem}
\label{t003}Let $n$, $k$, $s$ be positive integers such that $k\geq 2$, $%
s\geq 2$ and $n\geq sk$. Then%
\begin{equation*}
\chi \left( KG^{2}\left( n,k\right) _{\widetilde{s\text{-}stab}}\right)
=n-s(k-1)\text{.}
\end{equation*}
\end{theorem}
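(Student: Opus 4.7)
The plan is to sandwich the chromatic number between matching upper and lower bounds, both of which have already been established in the paper. The key arithmetic observation is that when $r = 2$ the denominator $r - 1 = 1$, so the ceilings in both bounds collapse and we can expect an exact match rather than a gap.

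First I would apply Lemma~\ref{l00001} with $r = 2$. Since $s \geq 2$, we have $\max(\{2,s\}) = s$, so
\begin{equation*}
\chi\left(KG^{2}(n,k)_{\widetilde{s\text{-}stab}}\right) \;\leq\; \left\lceil \frac{n-s(k-1)}{2-1}\right\rceil \;=\; n - s(k-1),
\end{equation*}
valid for every $n \geq sk$. This disposes of the upper direction with no extra work.

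For the lower bound, I would specialize Theorem~\ref{t00001} to the prime $p = 2$. The hypothesis of that theorem reads $n \geq (p+s-2)(k-1) + \max(\{p,s\})$, which at $p=2$ (using $s \geq 2$) becomes $n \geq s(k-1) + s = sk$, coinciding exactly with the standing assumption of the current theorem. The conclusion then reads
\begin{equation*}
\chi\left(KG^{2}(n,k)_{\widetilde{s\text{-}stab}}\right) \;\geq\; \left\lceil \frac{n-(2+s-2)(k-1)}{2-1}\right\rceil \;=\; n - s(k-1),
\end{equation*}
since $p + s - 2 = s$ when $p = 2$.

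Combining the two displayed inequalities gives the claimed equality. There is no real obstacle: the topological content has already been absorbed into the $Z_2$-Tucker argument of Theorem~\ref{t00001}, and the present theorem is effectively a clean numerical specialization. The only item worth flagging is the routine verification that the hypothesis $n \geq sk$ suffices to invoke Theorem~\ref{t00001} at $p = 2$, which holds on the nose because $(p+s-2)(k-1) + \max(\{p,s\}) = sk$ in this regime.
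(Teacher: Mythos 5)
Your proof is correct and is essentially the paper's own argument: the paper obtains Theorem~\ref{t003} exactly by combining the upper bound of Lemma~\ref{l00001} at $r=2$ with the lower bound of Theorem~\ref{t00001} at $p=2$, where $(p+s-2)(k-1)+\max(\{p,s\})=sk$ makes the hypotheses coincide. Nothing further is needed.
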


Recall the definition of $(Z_{p}\cup \{0\})^{n}$ for $p=2$. We write $%
\{+,-,0\}^{n}$ for the set of all \textit{signed subsets} of $[n]$, the
family of all pairs $(X^{+},X^{-})$ of disjoint subsets of $[n]$. Such
subsets can alternatively be encoded by \textit{sign vectors} $X\in
\{+,-,0\}^{n}$, where $X_{i}=+$ denotes that $i\in $ $X^{+}$, while $X_{j}=-$
\ means that $j\in $ $X^{-}$. The \textit{positive part} of $X$ is $X^{+}:=$ 
$\{i\in \lbrack n]:X_{i}=+\}$, and analogously for the \textit{negative part}
$X^{-}$. For every signed subset $(X^{+},X^{-})$ in $\{+,-,0\}^{n}$, the
signed subset $(X^{-},X^{+})$ can be encoded by sign vector $-X$, that is, $%
\left( -X\right) ^{+}=X^{-}$ and $\left( -X\right) ^{-}=X^{+}$. For example, 
$-(0+-+)=(0-+-)$. We write $\left\vert X\right\vert $ for the number of
non-zero signs in $X$, that is, $\left\vert X\right\vert =\left\vert
X^{+}\right\vert +\left\vert X^{-}\right\vert $. Let $X\in
\{+,-,0\}^{n}\backslash \{0\}^{n}$. We write $\max \left( X\right) $ for the
maximal element of $X^{+}\cup X^{-}$, that is, $\max \left( X\right) =\max
\left( X^{+}\cup X^{-}\right) $.

In the following, we shall switch freely between the different notations for
signed sets. For sign vectors, we use the usual partial order from oriented
matroid theory, which is defined componentwise with $0\leq +$ and $0\leq -$.
Thus $X\leq Y$, that is $(X^{+},X^{-})\leq (Y^{+},Y^{-})$, holds if and only
if $X^{+}\subseteq Y^{+}$ and $X^{-}\subseteq Y^{-}$. For positive integers $%
m$ and $n$, and an antipodal subset $\mathcal{S}$ $\subseteq \{+,-,0\}^{n}$ $%
\backslash \{0\}^{n}$, a labeling map $\lambda :\mathcal{S}$ $%
\longrightarrow \{\pm 1,\pm 2,\ldots ,\pm m\}$ is called \textit{antipodal}
if $\lambda \left( -X\right) =-\lambda \left( X\right) $ for all $X$.

Let $n$ be a positive integer and let $sd([-1,+1]^{n})$ be the \textit{%
barycentric subdivision} of the $n$-cube $[-1,+1]^{n}$. We denote $\partial
\left( sd([-1,+1]^{n})\right) $ to be the boundary of $sd([-1,+1]^{n})$. In
the rest of this paper, the vertex set of $\partial \left(
sd([-1,+1]^{n})\right) $ can be identified with $\{+,-,0\}^{n}\backslash
\{0\}^{n}$. Every set with $n$ vertices in $\{+,-,0\}^{n}\backslash
\{0\}^{n} $ form an $\left( n-1\right) $-simplex if they can be arranged in
a sequence $X_{1}\leq X_{2}\leq \cdots \leq X_{n}$ satisfying $\left\vert
X_{j}\right\vert =j$ for $j=1,2,\ldots ,n$. Fan $\cite{Ky Fan52}$ proposed a
combinatorial formula on the barycentric subdivision of the octahedral
subdivision of $n$-sphere $S^{n}$. The following lemma corresponds to Fan's
combinatorial lemma $\cite{Ky Fan52}$ applied to $\partial \left(
sd([-1,+1]^{n})\right) $.

\begin{lemma}[Octahedral Fan's lemma]
\label{l111}Let $m$, $n$ be positive integers. Suppose $\lambda
:\{+,-,0\}^{n}\backslash \{0\}^{n}\longrightarrow \{\pm 1,\pm 2,\ldots ,\pm
m\}$\ satisfies (i) $\lambda $ is antipodal and (ii) $X\leq Y$ implies $%
\lambda \left( X\right) \neq -\lambda \left( Y\right) $ for all $X,Y$. Then
there are $n$ signed sets $X_{1}\leq X_{2}\leq \cdots \leq X_{n}$ in $%
\{+,-,0\}^{n}\backslash \{0\}^{n}$ such that $\{\lambda (X_{1}),\lambda
(X_{2}),\ldots ,\lambda (X_{n})\}=\{+a_{1},-a_{2},\ldots ,(-1)^{n-1}a_{n}\}$%
, where $1\leq a_{1}<a_{2}<\cdots <a_{n}\leq m$. In particular, $m\geq n$.
\end{lemma}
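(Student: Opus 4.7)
The plan is to prove the Octahedral Fan's lemma by Ky Fan's classical parity argument, applied to the pseudomanifold triangulation $\partial(sd([-1,+1]^n))$ of the $(n-1)$-sphere. The $(n-1)$-simplices of this complex are in bijection with the maximal chains $X_1 < X_2 < \cdots < X_n$ in $\{+,-,0\}^n \setminus \{0\}^n$ with $|X_j| = j$, and $(n-2)$-faces are obtained by deleting one of the $X_j$ from such a chain. Hypothesis (i) gives a free $Z_2$-action $X \mapsto -X$ on the complex which is compatible with $\lambda$, and hypothesis (ii) is the key no-cancellation condition that rules out the ``collapse'' of the labeling.

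Call a maximal chain a \emph{positive room} if its label sequence in chain order is $(+a_1, -a_2, +a_3, \ldots, (-1)^{n-1}a_n)$ for some $1 \leq a_1 < a_2 < \cdots < a_n$, and call an $(n-2)$-simplex $Y_1 < \cdots < Y_{n-1}$ a \emph{positive door} if its labels form the strictly increasing alternating pattern $(+b_1, -b_2, \ldots, (-1)^{n-2}b_{n-1})$ with $b_1 < \cdots < b_{n-1}$. I would then double-count incidences between positive doors and $(n-1)$-simplices containing them. On one hand, every positive room contains exactly one positive door as a face, namely the one obtained by deleting the top vertex $X_n$; a case-by-case inspection of which other vertex deletions can produce a positive door shows that every non-positive room contains an even number (either $0$ or $2$) of positive door faces. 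On the other hand, each positive door $Y_1 < \cdots < Y_{n-1}$ extends to exactly two $(n-1)$-simplices (the pseudomanifold property of $\partial(sd([-1,+1]^n))$), obtained by flipping the unique remaining coordinate of $Y_{n-1}$ to $+$ or $-$; hypothesis (ii), applied to the pair $Y_{n-1} \leq Y_n$, excludes the ``cancelling'' extension in which $\lambda(Y_n) = -\lambda(Y_{n-1})$, so the parity of extensions yielding a positive room is forced to be odd per door. Comparing the two parity counts forces the number of positive rooms to be odd, hence at least one exists, and any such room witnesses exactly the desired conclusion.

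The main obstacle is the combinatorial case analysis verifying that every non-positive maximal simplex contains an even number of positive door faces: for each internal deletion $X_1 < \cdots < \widehat{X_j} < \cdots < X_n$, one must determine when the resulting labels form a positive-alternating sequence, and show that such occurrences come in matched pairs unless the full room is itself positive. This boils down to the symmetry in how an alternating $(+, -, +, -, \ldots)$ pattern can be ``reconstructed'' from a chain missing one vertex, and it is precisely here that hypothesis (ii) is used to prevent a stray cancellation from unbalancing the count. Once the existence of a positive room is secured, the lemma's conclusion $\{\lambda(X_1), \ldots, \lambda(X_n)\} = \{+a_1, -a_2, \ldots, (-1)^{n-1}a_n\}$ with $1 \leq a_1 < a_2 < \cdots < a_n \leq m$ is immediate, and $m \geq n$ follows from having $n$ distinct positive integers in $\{1, \ldots, m\}$.
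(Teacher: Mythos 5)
First, a point of comparison: the paper does not prove Lemma~\ref{l111} at all. It is imported verbatim from Fan's 1952 paper, specialized to the triangulation $\partial(sd([-1,+1]^{n}))$ of $S^{n-1}$, so there is no in-paper argument to measure your sketch against; it has to stand on its own, and it does not. The structural obstruction is that $\partial(sd([-1,+1]^{n}))$ is a \emph{closed} pseudomanifold: every $(n-2)$-simplex lies in exactly two facets, so the incidence count $\sum_{F}\#\{\text{positive doors contained in }F\}$ equals twice the number of positive doors and is automatically even. Even if your two local claims were true, the double count would only show that the number of positive rooms is \emph{even}, which yields no existence statement. This is exactly why Fan's lemma cannot be obtained by a one-shot Sperner-style count on a sphere; the known proofs either induct on dimension by cutting the sphere into hemispheres (balls whose boundary doors lie on the equator and are controlled by the inductive hypothesis together with antipodality), or run the Prescott--Su path-following argument in which the free involution pairs up path endpoints. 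In your sketch hypothesis (i), the $Z_{2}$-equivariance, never actually enters the count, yet it is indispensable: it is the symmetry, not condition (ii), that creates the odd parity.

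Both local claims are also false as stated. For $n=2$ a positive door is a single vertex carrying a positive label, and one can realize (consistently with (i) and (ii)) an edge $X_{1}\leq X_{2}$ with $\lambda(X_{1})=+5$, $\lambda(X_{2})=-3$: this facet is not a positive room (since $5>3$) yet contains exactly one positive door, so non-positive rooms need not contain an even number of positive doors. In the other direction, a positive door with labels $(+1,-2)$ for $n=3$ may have its two completions labelled $(+1,-2,-5)$ and $(+1,-2,-7)$; hypothesis (ii) only forbids the inserted label from being $-1$ or $+2$, so both completions can fail to be positive rooms and the door lies in an even number (zero) of positive rooms, contradicting your ``odd per door'' assertion. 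Note also that a positive door need not be missing the top-cardinality vertex, in which case its two completions arise by inserting an intermediate signed set rather than by flipping the last zero coordinate of $Y_{n-1}$. To make the argument work you would need Fan's genuine inductive scheme (tracking both positively and negatively alternating simplices and letting the antipodal action do the bookkeeping), or simply cite \cite{Ky Fan52} as the paper does.
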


We say that a graph is \textit{completely multicolored} in a coloring if all
its vertices receive different colors. The existence of large colorful
bipartite subgraphs in a properly colored graph has been extensively studied
in the literature, see \cite{Alishahi and
Hajiabolhassan15,Alishahi17,Alishahi and Hajiabolhassan17,Chang Liu and Zhu
13,Pengan11,Liu and Zhu16,Meunier14,Simonyi and Tardos06,Simonyi and
Tardos07,Simonyi and Tardif13}. Simonyi and Tardos in 2007 \cite{Simonyi and
Tardos07} improved Dol'nikov's theorem. The special case for Kneser graphs
is due to Ky Fan \cite{Ky Fan82}.

\begin{theorem}[Simonyi-Tardos theorem]
\label{t00005} Let $\mathcal{H}$ be a hypergraph and assume that \O\ is not
an edge of $\mathcal{H}$. Let $r=$ cd$^{2}(\mathcal{H})$. Then any proper
coloring of $KG^{2}(\mathcal{H})$ with colors $1,\ldots ,t$ ($t$ arbitrary)
must contain a completely multicolored complete bipartite graph $%
K_{\left\lceil \frac{r}{2}\right\rceil ,\left\lfloor \frac{r}{2}%
\right\rfloor }$ such that the $r$ different colors occur alternating on the
two parts of the bipartite graph with respect to their natural order.
\end{theorem}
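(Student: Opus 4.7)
The plan is to deduce Theorem~\ref{t00005} from the Octahedral Fan Lemma (Lemma~\ref{l111}) by constructing an antipodal labeling of $\{+,-,0\}^n\backslash\{0\}^n$ that simultaneously encodes a $2$-colorability defect witness for $\mathcal{H}$ and the given proper coloring. Without loss of generality assume $t\geq r$, as otherwise $KG^2(\mathcal{H})$ admits no proper $t$-coloring at all by the Dol'nikov--K\v{r}\'{\i}\v{z} Theorem~\ref{t00006} and the conclusion is vacuous. Since $r=\text{cd}^2(\mathcal{H})$, fix a partition $[n]=B\sqcup A^+\sqcup A^-$ with $|B|=r$ such that neither $A^+$ nor $A^-$ contains an edge of $\mathcal{H}$, and let $c:E(\mathcal{H})\to[t]$ denote the given proper coloring.

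For $X=(X^+,X^-)\in\{+,-,0\}^n\backslash\{0\}^n$, I would define $\lambda(X)\in\{\pm 1,\ldots,\pm(n-r+t)\}$ in two regimes. \emph{High regime:} if $X^+\cup X^-$ contains an edge of $\mathcal{H}$, set $c^\ast(X)=\min\{c(S):S\in E(\mathcal{H}),\ S\subseteq X^+\ \text{or}\ S\subseteq X^-\}$ and $\lambda(X)=\varepsilon\cdot\bigl((n-r)+c^\ast(X)\bigr)$, where $\varepsilon=+$ if a minimum-color witness edge lies in $X^+$ and $\varepsilon=-$ otherwise (ties broken toward $+$). \emph{Low regime:} if neither $X^+$ nor $X^-$ contains an edge, assign $\lambda(X)$ a value in $\pm\{1,\ldots,n-r\}$ using the witness partition, via a Ky~Fan/Matou\v{s}ek style leading-disagreement rule: normalise $X$ against the witness signs on $A$, with $|\lambda(X)|$ recording the first position of disagreement and the sign its direction. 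Antipodality is immediate from the symmetric design. The non-opposite-on-faces condition $X\leq Y\Rightarrow\lambda(X)\neq-\lambda(Y)$ splits into three cases: high/high (monotonicity preserves the minimum-color side, so the signs agree whenever magnitudes do), high/low (the magnitude ranges are disjoint), and low/low (the leading disagreement can only move earlier or stay put, and the sign is preserved when it does).

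Applying Lemma~\ref{l111} produces a chain $X_1\leq X_2\leq\cdots\leq X_n$ whose label set is $\{+a_1,-a_2,+a_3,\ldots,(-1)^{n-1}a_n\}$ with $1\leq a_1<a_2<\cdots<a_n\leq n-r+t$. Only $n-r$ magnitudes are available to low labels, so at least $r$ of the $a_i$ exceed $n-r$; being a suffix of a sorted list, they are precisely $a_{n-r+1}<\cdots<a_n$ and by the label-set structure carry the alternating signs $(-1)^{n-r},(-1)^{n-r+1},\ldots,(-1)^{n-1}$. Each such high label is realised by some $X_{j(i)}$ in the chain and exhibits an edge $S_i\in E(\mathcal{H})$ with $c(S_i)=a_i-(n-r)$ contained in $X_{j(i)}^+$ (when the sign is $+$) or $X_{j(i)}^-$ (when the sign is $-$). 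Since $X_{j(i)}\leq X_n$, every $+$-labelled $S_i$ sits inside $X_n^+$ and every $-$-labelled $S_i$ sits inside $X_n^-$; disjointness of $X_n^+$ and $X_n^-$ then makes every $(+)$-edge disjoint from every $(-)$-edge. This yields a completely multicoloured complete bipartite subgraph $K_{\lceil r/2\rceil,\lfloor r/2\rfloor}$ of $KG^2(\mathcal{H})$ whose $r$ distinct colours $c(S_{n-r+1})<\cdots<c(S_n)$ appear alternately on the two parts in natural order.

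The main obstacle is the low-regime construction together with the low/low verification of Fan's non-opposite condition. The rule must (i) always land in the range $\pm\{1,\ldots,n-r\}$, which is the pigeonhole ingredient forcing at least $r$ high labels, and (ii) respect the antipodal action and the partial order on signed sets. This is the only step where the specific structure of the defect witness $B\sqcup A^+\sqcup A^-$ enters beyond the single number $r$, and aligning the leading-disagreement rule with the witness requires some bookkeeping. Once that prescription is firmly in place, the pigeonhole and the read-off in the previous paragraph are immediate from Lemma~\ref{l111}.
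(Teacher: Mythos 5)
First, a remark on scope: the paper does not prove Theorem~\ref{t00005} at all --- it is quoted from Simonyi and Tardos \cite{Simonyi and Tardos07} --- and the paper's own Fan-lemma argument appears only for the analogous Theorem~\ref{t00003}. Your overall architecture (an antipodal labeling with a ``low'' range of $n-r$ magnitudes and a ``high'' range indexed by colors, then Lemma~\ref{l111}, then pigeonhole and read-off) is exactly the standard route and matches the scheme of that proof. The high/high verification and the final extraction of the bipartite subgraph are sound; note in particular that you never need the labels of the high-regime chain elements to be monotone along the chain --- each high label is realized by some $X_{j(i)}\leq X_n$, which is all your read-off uses, so your choice of \emph{minimum} color (where the paper's analogue uses the maximum) is harmless.

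The genuine gap is the low regime, and you have located it yourself. As written, the ``leading disagreement against the witness signs on $A$'' is not a well-defined map into $\pm\{1,\ldots,n-r\}$: it is undefined when $X^{+}\subseteq A^{+}$ and $X^{-}\subseteq A^{-}$ (such $X$ do lie in the low regime), it does not specify what happens at positions of $B$, and a ``first position of disagreement'' is an index in $[n]$, not a number in $[n-r]$, so your condition (i) is not established. The repair is to discard the witness partition entirely: if neither $X^{+}$ nor $X^{-}$ contains an edge, then deleting the $n-\left\vert X\right\vert$ vertices of $[n]\setminus (X^{+}\cup X^{-})$ leaves an induced subhypergraph properly $2$-colored by $(X^{+},X^{-})$, so $\left\vert X\right\vert \leq n-\text{cd}^{2}(\mathcal{H})=n-r$ by the very definition of the defect; hence $\lambda (X):=\pm \left\vert X\right\vert$, signed by the first nonzero coordinate of $X$, is antipodal, lands in $\pm \{1,\ldots ,n-r\}$, and satisfies the low/low Fan condition trivially, since $X\leq Y$ with $\left\vert X\right\vert =\left\vert Y\right\vert$ forces $X=Y$. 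One further small repair: your tie-break ``toward $+$'' in the high regime would destroy antipodality if a minimum-color edge lay in $X^{+}$ and another in $X^{-}$; you should observe that this cannot happen, because two such edges would be disjoint and identically colored, contradicting properness of $c$ --- the same observation your high/high case already depends on.
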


In 2014, Meunier \cite{Meunier14} found the first colorful type result for
uniform hypergraphs to generalize Simonyi and Tardos's work \cite{Simonyi
and Tardos07}. Follow the proof technique of Theorem \ref{t00001}, we
propose the following strengthened result of the Simonyi-Tardos Theorem \ref%
{t00005} about the case $\mathcal{H=}\binom{\left[ n\right] }{k}_{\widetilde{%
s\text{-}stab}}$ via the Octahedral Fan's Lemma \ref{l111}.

\begin{theorem}
\label{t00003}Let $n$, $k$, $s$, and $t$ be positive integers such that $%
k\geq 2$, $s\geq 2$, $n\geq sk$, and $t=n-s(k-1)$. Then any proper coloring
of $KG^{2}\left( n,k\right) _{\widetilde{s\text{-}stab}}$ with $\{1,2,\ldots
,m\}$ colors ($m$ arbitrary) must contain a completely multicolored complete
bipartite subgraph $K_{\left\lceil \frac{t}{2}\right\rceil \left\lfloor 
\frac{t}{2}\right\rfloor }$ such that the $t$ different colors occur
alternating on the two sides of the bipartite graph with respect to their
natural order.
\end{theorem}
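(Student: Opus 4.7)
The plan is to adapt the labeling construction from the proof of Theorem \ref{t00001} specialized to $p=2$, and to feed it into the Octahedral Fan's Lemma \ref{l111} in place of the $Z_p$-Tucker lemma. Fix a proper coloring $c$ of $KG^{2}(n,k)_{\widetilde{s\text{-}stab}}$ with colors in $[m]$, set $\alpha = (s-1)(k-1)$ and $M = \alpha + (k-1) + m$. I would define an antipodal labeling $\lambda : \{+,-,0\}^{n}\setminus\{0\}^{n}\longrightarrow \{\pm 1,\pm 2,\ldots,\pm M\}$ as follows. To each $X=(X^{+},X^{-})$ associate the invariants $\mathcal{I}(X)$, $\mathcal{C}(X)$, $\mathcal{M}(X)$ as in Theorem \ref{t00001} (with $p=2$); the \emph{sign} of $\lambda(X)$ records whether the almost $s$-stable subset realizing $\mathcal{I}(X)$ and $\mathcal{M}(X)$ sits in $X^{+}$ or $X^{-}$, while the magnitude of $\lambda(X)$ is: $(\mathcal{I}(X)-1)(s-1)+\bigl(\max(X)\bmod(s-1)\bigr)+1$ when $|\mathcal{C}(X)|=2$ and $\mathcal{I}(X)\le k-1$; $\alpha+\mathcal{I}(X)$ when $|\mathcal{C}(X)|=1$ and $\mathcal{I}(X)\le k-1$; and $\alpha+(k-1)+c(S)$, where $S$ is the maximal almost $s$-stable $k$-subset of $X^{+}$ (resp.\ $X^{-}$) under the chosen total order $\preceq$, when $\mathcal{I}(X)=k$. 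Antipodality $\lambda(-X)=-\lambda(X)$ is immediate from swapping the roles of $X^{+}$ and $X^{-}$.

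The main step is to verify the Fan hypothesis: for every $X\le Y$ in $\{+,-,0\}^{n}\setminus\{0\}^{n}$ one must have $\lambda(X)\neq -\lambda(Y)$. In the first range $|\lambda|\le\alpha$, the same congruence argument as in the Case~I analysis of Theorem \ref{t00001} forces $\mathcal{I}(X)=\mathcal{I}(Y)$ and $\max(X)\equiv\max(Y)\pmod{s-1}$; opposite signs would then produce an almost $s$-stable subset on one side of $X$ and an almost $s$-stable subset realizing $\mathcal{M}(Y)$ on the other side, and extending the former by $\max(Y)$ (which has the same residue mod $s-1$ as $\max(X)$) would boost $\mathcal{I}(Y)$ strictly above $\mathcal{I}(X)$, a contradiction. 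In the middle range $\alpha<|\lambda|\le\alpha+(k-1)$, $|\mathcal{C}(\cdot)|=1$ together with $X\le Y$ forces both "active" sides to coincide, so signs cannot flip. In the color range $|\lambda|\ge\alpha+k$, opposite signs would give two disjoint almost $s$-stable $k$-subsets (one in $X^{+}\subseteq Y^{+}$, the other in $Y^{-}$, or vice versa) bearing the same color under $c$, violating properness.

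Applying Lemma \ref{l111} yields a chain $X_{1}\le X_{2}\le\cdots\le X_{n}$ with $\{\lambda(X_{j})\}=\{+a_{1},-a_{2},\ldots,(-1)^{n-1}a_{n}\}$, $1\le a_{1}<a_{2}<\cdots<a_{n}\le M$. Among these $n$ distinct absolute values, at most $\alpha$ can lie in $\{1,\ldots,\alpha\}$ and at most $k-1$ in $\{\alpha+1,\ldots,\alpha+k-1\}$, so at least $n-\alpha-(k-1)=n-s(k-1)=t$ of them lie in the color range $\{\alpha+k,\ldots,M\}$. Each such label gives an almost $s$-stable $k$-set $S$ with $c(S)=|\lambda|-\alpha-(k-1)$, and these $t$ colors are pairwise distinct and appear in their natural order. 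The alternating sign pattern splits the $t$ chosen indices into two groups of sizes $\lceil t/2\rceil$ and $\lfloor t/2\rfloor$; for $j<\ell$ in opposite groups, $S_{j}$ lies inside $X_{j}^{\varepsilon}\subseteq X_{\ell}^{\varepsilon}$ while $S_{\ell}$ lies in $X_{\ell}^{-\varepsilon}$, so $S_{j}\cap S_{\ell}=\emptyset$, i.e.\ they are adjacent in $KG^{2}(n,k)_{\widetilde{s\text{-}stab}}$. This delivers exactly the completely multicolored $K_{\lceil t/2\rceil,\lfloor t/2\rfloor}$ with colors alternating in natural order.

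The main obstacle is the Fan hypothesis in the case analysis above: the $Z_{p}$-Tucker proof of Theorem \ref{t00001} only had to rule out equal label values, whereas here one must additionally rule out opposite signs under $X\le Y$, and it is precisely this strengthening of the combinatorial hypothesis that forces the colorful alternating bipartite structure to emerge. Once the labeling is shown to be valid, everything else is a counting argument and an application of the monotonicity along the chain.
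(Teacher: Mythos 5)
Your proposal is correct and follows essentially the same route as the paper: the same signed ($p=2$) version of the labeling from Theorem \ref{t00001}, verified against the hypothesis of the Octahedral Fan Lemma \ref{l111}, with the colorful $K_{\lceil t/2\rceil,\lfloor t/2\rfloor}$ extracted from the chain exactly as in the paper's argument. The only deviations are harmless: in the color range you pick the $\preceq$-maximal almost $s$-stable $k$-set rather than the one of largest color, and your extraction step correctly avoids needing chain order to agree with magnitude order by using only comparability along the chain for disjointness and the Fan sign alternation (in magnitude, i.e.\ color, order) for the bipartition sizes.
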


\begin{proof}
Assume that $KG(n,k)_{\widetilde{s\text{-}stab}}$ is properly colored with $%
m $ colors. For $S\in \binom{\left[ n\right] }{k}_{\widetilde{s\text{-}stab}%
} $, we denote by $c(S)$ its color.

Let $X=(X^{+},X^{-})\in \{+,-,0\}^{n}\backslash \{0\}^{n}$. Set

$\mathcal{I}(X):=\max (\{q\in \lbrack k]:T$ is an almost $s$-stable subset
of size $q$, and $T\subseteq X^{+}$ or $T\subseteq X^{-}\})$.

We define the function%
\begin{equation*}
\lambda :\{+,-,0\}^{n}\backslash \{0\}^{n}\longrightarrow \{\pm 1,\pm
2,\ldots ,\pm (s(k-1)+m)\}
\end{equation*}%
as follows.

\textbf{Case I.} $\mathcal{I}(X)\leq k-1$.

If there exist two disjoint almost $s$-stable $\mathcal{I}(X)$-subsets $S,T$
such that $S\subseteq X^{+}$ and $T\subseteq X^{-}$, then we define $\lambda
(X)$ as\newline
\begin{equation*}
\pm \left( \left( \mathcal{I}(X)-1\right) (s-1)+\left( \max
(X)-(s-1)\left\lfloor \frac{\max (X)}{s-1}\right\rfloor \right) +1\right) 
\text{,}
\end{equation*}%
where the sign indicates which of $\max \left( X^{+}\right) $ or $\max
\left( X^{-}\right) $ equals $max(X)$. Note that $\max (X)-(s-1)\left\lfloor 
\frac{\max (X)}{s-1}\right\rfloor $ is the remainder of $\max (X)$ divided
by $s-1$. Thus we obtain a value $\lambda (X)$ in the set $\{\pm 1,\pm
2,\ldots ,\pm (s-1)(k-1)\}$.

Otherwise, define $\lambda (X)$ as 
\begin{equation*}
\pm (\mathcal{I}(X)+(s-1)(k-1))\text{,}
\end{equation*}%
where the sign indicates which of $X^{-}$ or $X^{+}$ can contain an almost $%
s $-stable of size $\mathcal{I}(X)$. Thus we obtain a value $\lambda (X)$ in
the set $\{\pm ((s-1)(k-1)+1),\ldots ,\pm s(k-1)\}$.

\textbf{Case II.} $\mathcal{I}(X)=k$. By definition of $I(X)$, at least one
of $X^{+}$ and $X^{-}$ contains an almost $s$-stable $k$-subset. Among all
almost $s$-stable $k$-subsets included in $X^{+}$ and $X^{-}$, select the
one having the largest color. Call it $S$. Then define 
\begin{equation*}
\lambda (X)=\pm (c(S)+s(k-1))\text{,}
\end{equation*}%
where the sign indicates which of $X^{-}$ or $X^{+}$ the subset $S$ has been
taken from. Thus we obtain a value $\lambda (X)$ in the set $\{\pm
(s(k-1)+1),\ldots ,\pm (s(k-1)+m)\}$.

Clearly, $\lambda $ is antipodal. Follows the similar scheme as the proof of
Theorem \ref{t00001}, one can easily check that $X\leq Y$ implies $\lambda
\left( X\right) \neq -\lambda \left( Y\right) $ for all $X,Y$. Applying
Octahedral Fan's Lemma \ref{l111}, there are $n$ signed sets $X_{1}\leq
X_{2}\leq \cdots \leq X_{n}$ in $\{+,-,0\}^{n}\backslash \{0\}^{n}$ such
that $\{\lambda (X_{1}),\lambda (X_{2}),\ldots ,\lambda
(X_{n})\}=\{+a_{1},-a_{2},\ldots ,(-1)^{n-1}a_{n}\}$ where $1\leq
a_{1}<a_{2}<\cdots <a_{n}\leq m$. Since $1\leq \left\vert X_{1}\right\vert
<\left\vert X_{2}\right\vert <\cdots <\left\vert X_{n}\right\vert \leq n$,
we have $\left\vert X_{i}\right\vert =i$ for $i=1,2,\ldots ,n$. From the
definition of $\lambda $, we know that $\max (\{\lambda (X_{1}),\left\vert
\lambda (X_{2})\right\vert ,\ldots ,\left\vert \lambda
(X_{s(k-1)})\right\vert \})<\left\vert \lambda (X_{s(k-1)+1})\right\vert
<\left\vert \lambda (X_{s(k-1)+2})\right\vert <\cdots <\left\vert \lambda
(X_{n})\right\vert $. Therefore, we have $\lambda (X_{i})=(-1)^{i-1}a_{i}$
for $i=s(k-1)+1$, $s(k-1)+2$, $\ldots $, $n$. Assume that $n$ is even. If $%
s(k-1)$ is even, then there exist $A_{1},A_{3},\ldots ,A_{n-s(k-1)-1}$ in $%
\binom{\left[ n\right] }{k}_{\widetilde{s\text{-}stab}}$ such that $%
A_{i}\subseteq X_{s(k-1)+i}^{+}\subseteq X_{n}^{+}$ and $%
c(A_{i})=a_{s(k-1)+i}$ for $i=1$, $3$, $\ldots $, $n-s(k-1)-1$. Also, there
exist $A_{2},A_{4},\ldots ,A_{n-s(k-1)}$ in $\binom{\left[ n\right] }{k}_{%
\widetilde{s\text{-}stab}}$ such that $A_{i}\subseteq
X_{s(k-1)+i}^{-}\subseteq X_{n}^{-}$ and $c(A_{i})=a_{s(k-1)+i}$ for $i=2$, $%
4$, $\ldots $, $n-s(k-1)$. Since $X_{n}^{+}\cap X_{n}^{-}=$ \O , the induced
subgraph on vertices%
\begin{equation*}
\{A_{1},A_{3},\ldots ,A_{n-s(k-1)-1}\}\cup \{A_{2},A_{4},\ldots
,A_{n-s(k-1)}\}
\end{equation*}%
is a complete bipartite graph which is the desired subgraph. If $s(k-1)$ is
odd, then there exist $A_{1},A_{3},\ldots ,A_{n-s(k-1)}$ in $\binom{\left[ n%
\right] }{k}_{\widetilde{s\text{-}stab}}$ such that $A_{i}\subseteq
X_{s(k-1)+i}^{-}\subseteq X_{n}^{-}$\ and $c(A_{i})=a_{s(k-1)+i}$ for $i=1$, 
$3$, $\ldots $, $n-s(k-1)$. Also, there exist $A_{2},A_{4},\ldots
,A_{n-s(k-1)-1}$ in $\binom{\left[ n\right] }{k}_{\widetilde{s\text{-}stab}}$
such that $A_{i}\subseteq X_{s(k-1)+i}^{+}\subseteq X_{n}^{+}$ and $%
c(A_{i})=a_{s(k-1)+i}$ for $i=2$, $4$, $\ldots $, $n-s(k-1)-1$. Since $%
X_{n}^{+}\cap X_{n}^{-}=$ \O , the induced subgraph on vertices%
\begin{equation*}
\{A_{1},A_{3},\ldots ,A_{n-s(k-1)}\}\cup \{A_{2},A_{4},\ldots
,A_{n-s(k-1)-1}\}
\end{equation*}%
is a complete bipartite graph which is the desired subgraph. Now, assume
that $n$ is odd. We can prove the statement with the same kind of proof
works when $n$ is even (omitted here) directly. This completes the proof.
\end{proof}

In a graph $G=(V,E)$, the \textit{closed neighborhood} of a vertex $u$,
denoted $N[u]$, is the set $\{u\}\cup \{v:uv\in E\}$. The \textit{local
chromatic number} of a graph $G=(V;E)$, denoted $\chi _{l}(G)$, is the
maximum number of colors appearing in the closed neighborhood of a vertex
minimized over all proper colorings:%
\begin{equation*}
\chi _{l}(G)=\min_{c}\max_{v\in V}\left\vert c(N[v])\right\vert \text{,}
\end{equation*}%
where the minimum is taken over all proper colorings $c$ of $G$. This number
has been defined in 1986 by Erd\H{o}s, F\"{u}redi, Hajnal, Komj\'{a}th, R%
\"{o}dl, and Seress \cite{Erdos86}. Meunier \cite{Meunier14}, by using his
colorful theorem, generalized the Simonyi-Tardos lower bound \cite{Simonyi
and Tardos07} for the local chromatic number of Kneser graphs to the local
chromatic number of Kneser hypergraphs. The local chromatic number of
uniform Kneser hypergraphs have been extensively studied in the literature,
see \cite{Alishahi and Hajiabolhassan15,Alishahi17,Alishahi and
Hajiabolhassan17,Meunier14,Simonyi and Tardos06,Simonyi and Tardos07}. We
investigate the lower bound for the local chromatic number of almost $s$%
-stable Kneser graphs. The following result is an immediate consequence of
Theorem \ref{t00003}.

\begin{theorem}
Let $n$, $k$, and $s$ be positive integers such that $k\geq 2$, $s\geq 2$,
and $n\geq sk$. Then%
\begin{equation*}
\chi _{l}(KG^{2}\left( n,k\right) _{\widetilde{s\text{-}stab}})\geq
\left\lceil \frac{n-s(k-1)}{2}\right\rceil +1\text{.}
\end{equation*}
\end{theorem}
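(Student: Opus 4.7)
The plan is to derive this bound as a direct corollary of Theorem \ref{t00003}. Set $t = n - s(k-1)$. I want to show that for every proper coloring $c$ of $KG^{2}(n,k)_{\widetilde{s\text{-}stab}}$, there exists a vertex whose closed neighborhood contains at least $\lceil t/2 \rceil + 1$ distinct colors.

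First, I would fix an arbitrary proper coloring $c$ using colors $\{1,2,\ldots,m\}$ for some $m$. Applying Theorem \ref{t00003}, this coloring must contain a completely multicolored complete bipartite subgraph $K_{\lceil t/2 \rceil, \lfloor t/2 \rfloor}$; call its two parts $A$ and $B$, with $|A| = \lceil t/2 \rceil$ and $|B| = \lfloor t/2 \rfloor$. By the completely multicolored property, all $t$ vertices of $A \cup B$ carry pairwise distinct colors under $c$.

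Next, pick any vertex $v \in B$. Because $v$ is adjacent in $KG^{2}(n,k)_{\widetilde{s\text{-}stab}}$ to every vertex of $A$, the closed neighborhood $N[v]$ contains $\{v\} \cup A$. The $\lceil t/2 \rceil$ vertices in $A$ receive pairwise distinct colors, and $c(v)$ differs from each of them since $v \in B$ is part of the same completely multicolored bipartite subgraph. Therefore
\[
|c(N[v])| \geq |A| + 1 = \left\lceil \frac{t}{2} \right\rceil + 1 = \left\lceil \frac{n-s(k-1)}{2} \right\rceil + 1.
\]
Since $c$ was arbitrary, the local chromatic number of $KG^{2}(n,k)_{\widetilde{s\text{-}stab}}$ is at least this quantity, which is the desired inequality.

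There is essentially no obstacle here — once Theorem \ref{t00003} is in hand, the conclusion is a one-line argument about picking a vertex on the smaller side of the guaranteed colorful biclique. The real content lies in Theorem \ref{t00003} itself, so this theorem should be presented simply as its corollary.
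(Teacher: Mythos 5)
Your proposal is correct and is exactly the paper's route: the paper states this result as an immediate consequence of Theorem \ref{t00003}, and your one-line argument (pick a vertex on the smaller side of the guaranteed colorful $K_{\lceil t/2\rceil,\lfloor t/2\rfloor}$ and count the colors in its closed neighborhood) is the intended deduction. The only detail worth noting is that the smaller side is nonempty because $t=n-s(k-1)\geq s\geq 2$, so $\lfloor t/2\rfloor\geq 1$.
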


Our lower bound improves Simonyi and Tardos \cite{Simonyi and Tardos07}, and
Meunier's \cite{Meunier14} lower bounds for the local chromatic number of $%
KG^{2}\left( n,k\right) _{\widetilde{s\text{-}stab}}$ as well.


\begin{thebibliography}{99}
\bibitem{Alishahi and Hajiabolhassan15} \label{Alishahi and Hajiabolhassan15}%
M. Alishahi and H. Hajiabolhassan. On the chromatic number of general Kneser
hypergraphs. Journal of Combinatorial Theory, Series B \textbf{115} (2015),
186--209.

\bibitem{Alishahi17} \label{Alishahi17}M. Alishahi. Colorful Subhypergraphs
in Uniform Hypergraphs. Electron. J. Combin., \textbf{24(1)} (2017), \#P1,
23.

\bibitem{Alishahi and Hajiabolhassan17} \label{Alishahi and Hajiabolhassan17}%
M. Alishahi, H. Hajiabolhassan, and F. Meunier. Strengthening topological
colorful results for graphs, European Journal of Combinatorics \textbf{64}
(2017), 27--44.

\bibitem{ADL09} \label{ADL09}N. Alon, L. Drewnowski, and T. \L uczak, Stable
Kneser hypergraphs and ideals in $\mathbb{N}$ with the Nikod\'{y}m property,
Proceedings of the American Mathematical Society \textbf{137} (2009),
467--471.

\bibitem{AFL86} \label{AFL86}N. Alon, P. Frankl, and L. Lov\'{a}sz, The
chromatic number of Kneser hypergraphs, Transactions Amer. Math. Soc. 
\textbf{298} (1986), 359--370.

\bibitem{Chang Liu and Zhu 13} \label{Chang Liu and Zhu 13}G. J. Chang, D.
F. Liu and X. Zhu, A short proof for Chen's alternative Kneser coloring
lemma, J. Combin. Theory Ser. A \textbf{120(1)} (2013), 159--163.

\bibitem{Pengan11} \label{Pengan11}P. Chen, A new coloring theorem of Kneser
graphs, J. Combin. Theory Ser. A \textbf{118(3)} (2011), 1062--1071.

\bibitem{Pengan15} \label{Pengan15}P. Chen, On the multichromatic number of 
\textit{s}-stable Kneser graphs, Journal of Graph Theory \textbf{79} (2015),
233--248.

\bibitem{Dolnikov88} \label{Dolnikov88}V.L. Dol'nikov, A certain
combinatorial inequality. Siberian Math. J. \textbf{29} (1988), 375--397.

\bibitem{Erd76} \label{Erd76}P. Erd\H{o}s, Problems and results in
combinatorial analysis, Colloquio Internazionale sulle Teorie Combinatorie
(Rome 1973), \textbf{Vol. II, No. 17} in Atti dei Convegni Lincei, 1976, pp.
3--17.

\bibitem{Erdos86} \label{Erd Ko and Rado 61}P. Erd\H{o}s, Z. F\"{u}redi, A.
Hajnal, P. Komj\'{a}th, V. R\"{o}dl, and \'{A}. Seress. Coloring graphs with
locally few colors. Discrete Math., \textbf{59(1-2) }(1986), 21--34, 1986.

\bibitem{Frick16} \label{Frick16}F. Frick, Intersection patterns of finite
sets and of convex sets, Proc. Amer. Math. Soc. \textbf{145(7) }(2017),
2827--2842.

\bibitem{Frick17} \label{Frick17}F. Frick, Chromatic numbers of stable
Kneser hypergraphs via topological Tverberg-type theorems, ArXiv e-prints,
arXiv:1710.09434, Oct 2017.

\bibitem{Hajiabolhassan and Meunier16} \label{Hajiabolhassan and Meunier16}%
H. Hajiabolhassan and F. Meunier, Hedetniemi's conjecture for Kneser
hypergraphs, J. Combin. Theory Ser. A \textbf{143} (2016), 42--55.

\bibitem{Ky Fan52} \label{Ky Fan52}K. Fan, A generalization of Tucker's
combinatorial lemma witht topological applications, Annals of Mathematics 
\textbf{56(2)} (1952), 431--437.

\bibitem{Ky Fan82} \label{Ky Fan82}K. Fan, Evenly distributed subset of S$^{%
\text{n}}$ and a combinatorial application, Pacific J. Math. \textbf{98}
(1982), 323--325.

\bibitem{Jonsson12} \label{Jonsson12}J. Jonsson, On the chromatic number of
generalized stable Kneser graphs, manuscript (2012).

\bibitem{Kneser55} \label{Kneser55}M. Kneser, Ein Satz \"{u}ber abelsche
Gruppen mit Anwendungen auf die Geometrie der Zahlen. Math. Z., \textbf{61}
(1955), 429--434.

\bibitem{Kriz92} \label{Kriz92}I. K\v{r}\'{\i}\v{z}. Equivariant cohomology
and lower bounds for chromatic numbers. Trans. Amer. Math. Soc., \textbf{%
333(2)} (1992), 567--577.

\bibitem{Kriz2000} \label{Kriz2000}I. K\v{r}\'{\i}\v{z}. Equivariant
cohomology and lower bounds for chromatic numbers\textquotedblright\ [Trans.
Amer. Math. Soc. \textbf{333} (1992), no. 2, 567--577; MR1081939
(92m:05085)]. Trans. Amer. Math. Soc., \textbf{352(4)}:1951--1952, 2000.

\bibitem{Liu and Zhu16} \label{Liu and Zhu16} Liu, D. D.-F., Zhu, X., A
combinatorial proof for the circular chromatic number of Kneser graphs.
Journal of Combinatorial Optimization \textbf{32} (2016), 765--774

\bibitem{Lovasz78} \label{Lovasz78}L. Lov\'{a}sz, Kneser's conjecture,
chromatic number, and homotopy; J. Combin. Theory Ser. A \textbf{25(3)}
(1978), 319--324.

\bibitem{Matousek04} \label{Matousek04}J. Matou\v{s}ek, A combinatorial
proof of Kneser's conjecture, Combinatorica \textbf{24(1)} (2004), 163--170.

\bibitem{Meunier11} \label{Meunier11}F. Meunier, The chromatic number of
almost stable Kneser hypergraphs, J. Combin. Theory Ser. A \textbf{118(6)}
(2011), 1820--1828.

\bibitem{Meunier14} \label{Meunier14}F. Meunier. Colorful Subhypergraphs in
Kneser Hypergraphs. Electron. J. Combin., \textbf{21(1)} (2014), \#P1.8, 13.

\bibitem{Sarkaria90} \label{Sarkaria90}K.S. Sarkaria. A generalized Kneser
conjecture. J. Combinatorial Theory, Ser. B \textbf{49} (1990), 236--240.

\bibitem{Sani and Alishahi17} \label{Sani and Alishahi17}R. Sani and M.
Alishahi, A new lower bound for the chromatic number of general Kneser
hypergraphs, ArXiv e-prints, arXiv:1704.07052, April 2017.

\bibitem{Schrijver78} \label{Schrijver78}A. Schrijver, Vertex-critical
subgraphs of Kneser graphs, Nieuw Arch. Wisk. (3) \textbf{26(3)} (1978),
454--461.

\bibitem{Simonyi and Tardos06} \label{Simonyi and Tardos06}G. Simonyi and G.
Tardos, Local chromatic number, Ky Fan's theorem, and circular colorings,
Combinatorica \textbf{26(5)} (2006), 587--626.

\bibitem{Simonyi and Tardos07} \label{Simonyi and Tardos07}G. Simonyi and G.
Tardos, Colorful subgraphs in Kneser-like graphs. European J. Combin., 
\textbf{28(8)} (2007), 2188--2200.

\bibitem{Simonyi and Tardif13} \label{Simonyi and Tardif13}G. Simonyi, C.
Tardif, and A. Zsb\'{a}n. Colourful theorems and indices of homomorphism
complexes. Electr. J. Comb., \textbf{20(1) }(2013), \#P10.

\bibitem{Tucker46} \label{Tucker46}A.W. Tucker. Some topological properties
of disk and sphere. In: Proc. First Canadian Math. Congress, Montreal 1945.
(1946), 285--309.

\bibitem{Ziegler02} \label{Ziegler02}G. M. Ziegler, Generalized Kneser
coloring theorems with combinatorial proofs, Invent. Math. \textbf{147(3)}
(2002), 671--691, arXiv:math.CO/0103146.

\bibitem{Ziegler06} \label{Ziegler06}G. M. Ziegler. Erratum:
\textquotedblleft Generalized Kneser coloring theorems with combinatorial
proofs\textquotedblright\ [Invent. Math. \textbf{147(3)} (2002), no. 3,
671--691]. Invent. Math., \textbf{163(1)} (2006) 227--228.
\end{thebibliography}
\end{document}